\newtheorem{theorem}{Theorem}[section]
\newtheorem{corollary}[theorem] {Corollary}
\newtheorem{definition}[theorem]{Definition}
\newtheorem{proposition}[theorem]{Proposition}
\newtheorem{remark}[theorem]{Remark}
\title{\textbf{\sc On Weak Integer Additive Set-Indexers of Certain Graph Classes}}
\author{{\bf N K Sudev} $^{{1},{\ast}}$ and {\bf K A Germina $^{2}$}
\\ \\
$^{1}${\small Department of Mathematics}\\ {\small Vidya Academy of Science \& Technology} \\ {\small  Thalakkottukara, Thrissur - 680501, Kerala, India.}\\ {\small email: {\em sudevnk@gmail.com}}
\\ \vspace{0.3cm}
$^{\ast}$ {\small Corresponding author.}
\\
$^{2}${\small Department of Mathematics} \\ {\small School of Mathematical \& Physical Sciences} \\ {\small Central University of Kerala, Kasaragod - 671316, Kerala, India.}\\ {\small email: {\em srgerminaka@gmail.com}}
}
\date{}
\begin{document}
\maketitle

\begin{abstract}
Let $\mathbb{N}_0$ denote the set of all non-negative integers and $\mathcal{P}(\mathbb{N}_0)$ be its power set. An integer additive set-indexer (IASI) of a graph $G$ is an injective function $f:V(G)\to \mathcal{P}(\mathbb{N}_0)$ such that the induced function $f^+:E(G) \to \mathcal{P}(\mathbb{N}_0)$ defined by $f^+ (uv) = f(u)+ f(v)$ is also injective. An IASI $f$ is said to be a weak IASI if $|f^+(uv)|=\max(|f(u)|,|f(v)|)$ for all adjacent vertices $u,v\in V(G)$. The sparing number of a weak IASI graph $G$ is the minimum number of edges in $G$ with singleton set-labels. In this paper, we study the admissibility of weak integer additive set-indexers by certain graph classes and associated graphs of given graphs.
\end{abstract}

\noindent \textbf{Key Words:} Integer additive set-indexers, weak integer additive set-indexers, sparing number of a graph.
\newline
\textbf{AMS Subject Classification: 05C78}

\section{Introduction}

For all  terms and definitions, not defined specifically in this paper, we refer to \cite{FH} and \cite{BLS}. Unless mentioned otherwise, all graphs considered here are simple, finite and have no isolated vertices.

The {\em sum set} of two sets $A, B$, denoted by  $A+B$, is defined by $A + B = \{a+b: a \in A, b \in B\}$. Using the concepts of sum sets, the notion of integer additive set-indexers is introduced in \cite{GA} as follows. 

Let $\mathbb{N}_0$ denote the set of all non-negative integers. An {\em integer additive set-indexer} (IASI) is an injective function $f:V(G)\to \mathcal{P}(\mathbb{N}_0)$ such that the induced function $f^+:E(G) \to \mathcal{P}(\mathbb{N}_0)$ defined by $f^+ (uv) = f(u)+ f(v)$ is also injective. An IASI is said to be $k$-uniform if $|f^+(uv)|=k$ for all $u,v\in V(G)$.

The cardinality of the labeling set of an element (vertex or edge) of a graph $G$ is called the {\em set-indexing number} of that element. An element (a vertex or an edge) of graph which has the set-indexing number $1$ is called a {\em mono-indexed element} of that graph.

\noindent The notion of a weak IASI was introduced in \cite{GS3} as follows. 

A {\em weak IASI} is an IASI $f$ such that $|f^+(uv)|=\max(|f(u)|,|f(v)|)$ for all $u,v\in V(G)$. A graph which admits a weak IASI is known as a weak IASI graph. In a weak IASI graph $G$, at least one end vertex of every edge of $G$ has the set-indexing number $1$.

The {\em sparing number} of a graph $G$ is defined to be the minimum number of mono-indexed edges required for $G$ to admit a weak IASI and is denoted by $\varphi(G)$.

\noindent The main results on weak IASI graphs are the following. 

\begin{theorem}\label{T-WUOC}
\cite{GS3} An odd cycle $C_n$ has a weak IASI if and only if it has at least one mono-indexed edge. That is, the sparing number of an odd cycle is $1$. 
\end{theorem}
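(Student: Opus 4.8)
The plan is to establish the two inequalities $\varphi(C_n)\ge 1$ and $\varphi(C_n)\le 1$ separately, for $n$ odd, and then combine them. Throughout I will call a vertex $v$ \emph{mono-indexed} if $|f(v)|=1$ and \emph{non-mono-indexed} if $|f(v)|\ge 2$. The structural fact I would rely on is the observation already recorded in the excerpt: since for finite sets of integers one has $|A+B|\ge |A|+|B|-1$, the weak condition $|f^+(uv)|=\max(|f(u)|,|f(v)|)$ forces $\min(|f(u)|,|f(v)|)=1$, so every edge has at least one mono-indexed end. I would also note the refinement that an edge $uv$ is \emph{itself} mono-indexed exactly when both of its ends are mono-indexed vertices, because $|f^+(uv)|=\max(|f(u)|,|f(v)|)$ equals $1$ if and only if both $|f(u)|=1$ and $|f(v)|=1$.

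For the lower bound I would argue from the non-bipartiteness of the odd cycle. Suppose, for contradiction, that $C_n$ admits a weak IASI with no mono-indexed edge. Then no edge has two mono-indexed ends; combined with the fact that every edge has at least one mono-indexed end, every edge must then have exactly one mono-indexed end and one non-mono-indexed end. Reading the two vertex classes as two colours therefore produces a proper $2$-colouring of $C_n$. But an odd cycle is not bipartite and admits no proper $2$-colouring, a contradiction. Hence every weak IASI of $C_n$ has at least one mono-indexed edge, so $\varphi(C_n)\ge 1$.

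For the upper bound I would exhibit a weak IASI with exactly one mono-indexed edge. Writing the vertices in cyclic order $v_1,v_2,\dots,v_n$, I assign a singleton set to each $v_i$ with $i$ odd and a set of cardinality $2$ to each $v_i$ with $i$ even. Because $n$ is odd, both $v_1$ and $v_n$ are mono-indexed, so the single edge $v_nv_1$ is mono-indexed, while every other edge joins a singleton to a $2$-set. The weak condition holds automatically on each edge: adding a singleton $\{a\}$ to any set $B$ is a translation, so $|\{a\}+B|=|B|=\max(1,|B|)$, and two singletons sum to a singleton. It remains only to fix the actual integers so that the vertex labels are distinct and the edge sum sets are distinct, which can be secured by spacing the labels far apart (for instance small distinct integers on the odd-indexed vertices and well-separated blocks $\{10k,10k+1\}$, $k$ distinct, on the even-indexed vertices); this is a routine verification. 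The result is a weak IASI with a single mono-indexed edge, giving $\varphi(C_n)\le 1$.

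Combining the two bounds yields $\varphi(C_n)=1$. The conceptual heart of the argument is the lower bound, where the non-bipartiteness of the odd cycle is precisely the obstruction to a mono-indexed-edge-free labelling; the construction in the upper bound is essentially dictated by this same parity, and the only mild technical point there is verifying the two injectivity requirements, which the spacing of labels takes care of.
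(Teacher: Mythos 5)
The paper states this theorem without proof, citing \cite{GS3}, so there is no in-paper argument to compare against; your proof is correct and is essentially the standard one for this result: the sum-set bound $|A+B|\ge |A|+|B|-1$ forces every edge to have a mono-indexed end vertex, non-bipartiteness of the odd cycle then rules out a labelling in which every edge has exactly one mono-indexed end, and the alternating singleton/$2$-set assignment (which fails to alternate at exactly one edge because $n$ is odd) achieves the bound. Your one ``routine'' step is genuinely routine: taking the $2$-set blocks $\{Mk,\,Mk+1\}$ with $M$ larger than the spread of the singleton values makes all vertex labels and all edge sum-sets distinct, so both injectivity requirements hold.
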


\begin{theorem}\label{T-MIEOE}
\cite{GS3} An odd cycle $C_n$ that admits a weak IASI has odd number mono-indexed edges and an even cycle $C_n$ that admits a weak IASI has even number mono-indexed edges.
\end{theorem}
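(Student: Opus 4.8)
The plan is to argue entirely by a parity count on the vertex-labels of the cycle. First I would classify each vertex $v_i$ of $C_n$ according to its set-indexing number: call $v_i$ \emph{heavy} if $|f(v_i)|\ge 2$ and \emph{light} if $|f(v_i)|=1$ (that is, $v_i$ is mono-indexed). Since $f$ is a weak IASI, the observation recorded in the excerpt guarantees that at least one end vertex of every edge is mono-indexed; equivalently, no edge of $C_n$ joins two heavy vertices. Hence the heavy vertices form an independent set in $C_n$, so each heavy vertex has both of its cycle-neighbours light.

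Next I would identify precisely which edges are mono-indexed. Because $|f^+(v_iv_{i+1})|=\max(|f(v_i)|,|f(v_{i+1})|)$, an edge has set-indexing number $1$ if and only if \emph{both} its end vertices are light. Thus the mono-indexed edges are exactly the light--light edges, every edge incident with a heavy vertex is a light--heavy edge, and there are no heavy--heavy edges at all.

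The decisive step is a degree count. Let $k$ denote the number of heavy vertices. Each heavy vertex has degree $2$ in $C_n$, and by the independence just established both of its incident edges are light--heavy; moreover two distinct heavy vertices cannot share an edge (that would force a heavy--heavy edge). Counting the edge-endpoints lying at heavy vertices therefore shows that the number of light--heavy edges is exactly $2k$. Since $C_n$ has $n$ edges in total and no heavy--heavy edges occur, the number of light--light edges---that is, the number of mono-indexed edges---equals $n-2k$.

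Finally, $n-2k\equiv n \pmod{2}$, so the number of mono-indexed edges has the same parity as $n$: odd when $C_n$ is an odd cycle and even when $C_n$ is an even cycle, as required. I expect no serious obstacle here, since the structural input (no edge between two heavy vertices) is already available from the excerpt; the only point needing a little care is the bookkeeping that each edge incident with a heavy vertex is counted exactly once, which the independence of the heavy set makes clean.
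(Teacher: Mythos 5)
Your proof is correct and complete. One point worth knowing: this paper does not prove the statement at all --- Theorem~\ref{T-MIEOE} is quoted from the earlier paper \cite{GS3} as background, so there is no internal proof to compare against. On its own merits your argument is sound: the weak IASI condition $|f^+(uv)|=\max\bigl(|f(u)|,|f(v)|\bigr)$ gives both of your structural facts, namely that no edge joins two heavy vertices (if $|f(u)|,|f(v)|\ge 2$ then $|f(u)+f(v)|\ge |f(u)|+|f(v)|-1>\max(|f(u)|,|f(v)|)$, violating weakness --- this is the fact the paper records as ``at least one end vertex of every edge has set-indexing number $1$'') and that an edge is mono-indexed exactly when both ends are light. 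The counting step is clean because in a cycle every vertex has degree $2$ and the heavy set is independent, so with $k$ heavy vertices there are exactly $2k$ light--heavy edges, each counted once, and the mono-indexed edges number $n-2k\equiv n \pmod 2$, which is precisely the claimed parity in both the odd and even cases. The only cosmetic remark is that your argument actually proves slightly more than the parity statement: it shows the mono-indexed edge count is exactly $n-2k$ where $k$ is the number of non-mono-indexed vertices, from which the parity claim, and also the fact that an odd cycle needs at least one mono-indexed edge (Theorem~\ref{T-WUOC}), both fall out.
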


\begin{theorem}\label{T-WUOC1}
\cite{GS3} A bipartite graph need not contain any mono-indexed edges. That is, the sparing number of bipartite graphs is $0$.
\end{theorem}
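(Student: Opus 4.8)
The plan is to exhibit, for an arbitrary bipartite graph $G$ with bipartition $(X,Y)$, an explicit weak IASI under which no edge is mono-indexed, which by definition of the sparing number gives $\varphi(G)=0$. First I would record the structural fact underlying the whole theory: for any two nonempty finite sets $A,B$ of integers one has $|A+B|\ge |A|+|B|-1$ (order the elements of $A$ and $B$ and read off $|A|+|B|-1$ strictly increasing sums). Combining this with the weak condition $|f^+(uv)|=\max(|f(u)|,|f(v)|)$ and taking, say, $|f(u)|\ge|f(v)|$, we get $|f(u)|=|f^+(uv)|\ge|f(u)|+|f(v)|-1$, forcing $|f(v)|\le 1$. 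Hence on every edge $\min(|f(u)|,|f(v)|)=1$, so an edge is mono-indexed exactly when \emph{both} of its end vertices carry singleton labels. Therefore it suffices to produce a weak IASI in which no edge has two singleton-labelled ends.

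Bipartiteness is precisely what makes this possible: I would label all vertices of $X$ by singletons and all vertices of $Y$ by $2$-element sets. Concretely, writing $X=\{x_1,\dots,x_p\}$, $Y=\{y_1,\dots,y_q\}$ and fixing an integer $n>p$, set $f(x_i)=\{i\}$ and $f(y_j)=\{0,\,jn\}$. Every edge of $G$ joins some $x_i$ to some $y_j$, so one end is a singleton and the other has cardinality $2$; the induced label is $f^+(x_iy_j)=\{i,\,i+jn\}$, which has cardinality $2=\max(1,2)$. Thus the weak condition holds at every edge, and since each edge label has cardinality $2$, no edge is mono-indexed.

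It remains to check the two injectivity requirements, which is the only delicate point. Injectivity of $f$ on vertices is immediate: the singletons $\{i\}$ are pairwise distinct, the sets $\{0,jn\}$ are pairwise distinct, and a singleton can never equal a $2$-element set. For injectivity of $f^+$, the role of the gap $n>p$ is to separate the two coordinates of each edge label: since $1\le i\le p<n\le jn$, the smaller element of $\{i,\,i+jn\}$ always lies in $[1,p]$ while the larger lies in $[n+1,\infty)$. Consequently $f^+(x_iy_j)=f^+(x_{i'}y_{j'})$ forces $i=i'$ (matching the small elements) and then $i+jn=i'+j'n$, i.e. $jn=j'n$, so $(i,j)=(i',j')$; distinct edges receive distinct labels. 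The main, and rather mild, obstacle is exactly this last verification---choosing the integers spread out enough that the induced edge labels cannot collide---and the multiplier $n>p$ is what guarantees it.
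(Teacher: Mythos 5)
Your proof is correct, and it fills in details the paper itself omits: Theorem~\ref{T-WUOC1} is only quoted here from \cite{GS3} without proof, so there is no in-paper argument to compare against. Your construction is exactly the standard one used implicitly throughout this paper (label one partite class with distinct singletons and the other with distinct non-singleton sets, as in the proofs of Theorems~\ref{T-SNKSG} and \ref{T-SNMNC}); what you add beyond the usual sketch is worthwhile rigor, namely the sumset bound $|A+B|\ge |A|+|B|-1$ showing that a mono-indexed edge is precisely one with two singleton ends, and the explicit choice of multiplier $n>p$ so that the edge labels $\{i,\,i+jn\}$ are pairwise distinct, verifying the injectivity of $f^{+}$ that informal treatments take for granted.
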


\begin{theorem}\label{T-WKN}
\cite{GS3} A complete graph $K_n$ admits a weak IASI if and only if the maximum number of mono-indexed edges of $K_n$ is $\frac{1}{2}(n-1)(n-2)$.
\end{theorem}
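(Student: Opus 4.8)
The plan is to exploit the fundamental structural property of weak IASI graphs already recorded above: along every edge at least one end-vertex is mono-indexed. I would begin by partitioning $V(K_n)$ into the set $S$ of mono-indexed (singleton-labelled) vertices and the set $T$ of vertices carrying a set-label of cardinality at least $2$. If two vertices $u,v \in T$ were adjacent, then $|f^+(uv)| = \max(|f(u)|,|f(v)|) \geq 2$ while neither end is mono-indexed, contradicting the defining condition of a weak IASI. Hence $T$ must be an independent set of $K_n$. Since $K_n$ is complete, its only independent sets are the empty set and the singletons, so $|T| \leq 1$.

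From this the edge count follows at once. An edge is mono-indexed precisely when both its end-vertices are mono-indexed, that is, when both lie in $S$, because $|f(u)+f(v)|=1$ forces $|f(u)|=|f(v)|=1$. To make the number of mono-indexed edges as large as possible while keeping the labelling a genuine (non-trivial) weak IASI, one takes $|T| = 1$, so that $|S| = n-1$. The edges induced by $S$ are then all mono-indexed, and there are exactly $\frac{1}{2}(n-1)(n-2)$ of them; the remaining $n-1$ edges, each joining the single non-singleton vertex to a vertex of $S$, have set-indexing number equal to the cardinality of that non-singleton label, hence at least $2$, and so are not mono-indexed. This establishes the upper bound and identifies the extremal configuration.

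For the converse I would exhibit an explicit weak IASI realising $\frac{1}{2}(n-1)(n-2)$ mono-indexed edges. Assign to the $n-1$ vertices of $S$ distinct singletons $\{a_1\}, \ldots, \{a_{n-1}\}$ drawn from a Sidon set, so that the sums $a_i + a_j$ are pairwise distinct, and to the remaining vertex a two-element set $\{p, q\}$ chosen so that the sets $\{p + a_i,\, q + a_i\}$ are mutually disjoint and disjoint from all the singleton sums. One then checks that $|f^+(uv)| = \max(|f(u)|,|f(v)|)$ holds on every edge and that $f^+$ is injective, which shows the bound of the second paragraph is attained.

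I expect the main obstacle to be the verification of the global injectivity of $f^+$ in the construction: the integers $a_1, \ldots, a_{n-1}$ and $p,q$ must be chosen so that no two edge-labels coincide, simultaneously among the $\frac{1}{2}(n-1)(n-2)$ singleton sums, among the $2(n-1)$ values produced at the non-singleton vertex, and across the two families. A clean way to finish is to take the $a_i$ as a sufficiently spread-out Sidon set and then pick $p$ and $q$ of much larger magnitude, so that every potential collision is ruled out by size, while the structural argument of the first two paragraphs guarantees that the count $\frac{1}{2}(n-1)(n-2)$ cannot be exceeded.
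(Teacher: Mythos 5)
Note first that the paper contains no proof of this theorem: it is imported from \cite{GS3} as a quoted result, so your attempt can only be measured against the intended content and the way the paper actually uses it. On that score your structural core is exactly right and is the natural argument: by the property recorded in the introduction (every edge of a weak IASI graph has a mono-indexed end), the vertices with non-singleton labels form an independent set, an independent set of $K_n$ has at most one vertex, and an edge is mono-indexed precisely when both ends are. Your construction is also sound, and easier than you fear: a two-element set can never equal a one-element set, so no collision ``across the two families'' is possible; the only checks needed are that the singleton sums $a_i+a_j$ are pairwise distinct (your Sidon set) and that the pairs $\{p+a_i,\,q+a_i\}$ are pairwise distinct, which is automatic once $p\neq q$, since $\{p+a_i,q+a_i\}=\{p+a_j,q+a_j\}$ with $i\neq j$ would force $p-q=q-p$. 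The large-magnitude choice of $p,q$ is thus only needed to keep the two-element edge labels distinct from the singleton edge labels, not for anything structural.

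The one genuine slip is the direction of the extremal claim. Your closing assertion that the count $\frac{1}{2}(n-1)(n-2)$ ``cannot be exceeded'' is false as stated: labelling all $n$ vertices of $K_n$ by distinct singletons drawn from a Sidon set is a perfectly valid weak IASI (here $|f^+(uv)|=1=\max(|f(u)|,|f(v)|)$ on every edge and $f^+$ is injective), and it has all $\binom{n}{2}$ edges mono-indexed; your parenthetical ``(non-trivial)'' restriction is doing unacknowledged work and corresponds to no definition in the paper. The theorem's wording (``maximum'') is admittedly garbled in the source, but every downstream application in this paper --- Proposition \ref{C-WKN}, Theorems \ref{T-SNKSG1}, \ref{T-SNSG} and \ref{T-SNKSG} --- reads it as a \emph{minimum}, i.e.\ as the sparing number: every weak IASI of $K_n$ has at least $\binom{n-1}{2}$ mono-indexed edges, and this bound is attained. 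Fortunately your own analysis proves precisely that statement: $|T|\leq 1$ gives a mono-indexed edge count of $\binom{|S|}{2}\geq\binom{n-1}{2}$, and your Sidon-plus-pair construction attains equality. So the repair is purely one of restating the conclusion as a lower bound that is achieved, rather than an upper bound that cannot be exceeded.
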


\begin{theorem}\label{T-WIUG}
\cite{GS4} The union $G_1\cup G_2$ of two weak IASI graphs $G_1$ and $G_2$ admits a weak IASI. Moreover, $\varphi(G_1\cup G_2)=\varphi(G_1)+\varphi(G_2)-\varphi(G_1\cap G_2)$.
\end{theorem}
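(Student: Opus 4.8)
The plan is to reduce the statement to a purely combinatorial count and then apply inclusion--exclusion to the mono-indexed edges. First I would record the elementary sum set fact underlying weak IASIs: for finite sets $A,B\subset\mathbb{N}_0$ one has $|A+B|\ge |A|+|B|-1$, so $|A+B|=\max(|A|,|B|)$ holds if and only if at least one of $A,B$ is a singleton. Consequently, under any weak IASI the vertices carrying non-singleton labels form an independent set, and an edge is mono-indexed precisely when \emph{both} of its end vertices are mono-indexed. Thus a weak IASI is, combinatorially, the choice of an independent set of ``large'' vertices (the numerical labels being then chosen sufficiently spread out, e.g.\ along a Sidon-type set, to keep $f$ and $f^+$ injective), and $\varphi(G)$ is the minimum over such choices of the number of edges both of whose ends are singleton.

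For the admissibility of $G_1\cup G_2$, I would take weak IASIs $f_1$ of $G_1$ and $f_2$ of $G_2$ and first modify them so that they coincide on the common vertex set $V(G_1)\cap V(G_2)$ while using disjoint blocks of integers on the remaining vertices; injectivity of the combined map and of its induced edge map is then arranged by the spacing of these blocks. Setting $f=f_1$ on $V(G_1)$ and $f=f_2$ on $V(G_2)\setminus V(G_1)$ gives a well-defined map, and since every edge of $G_1\cup G_2$ belongs to $G_1$ or to $G_2$ it has a singleton end, so $f$ is a weak IASI.

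The heart of the formula is a pointwise identity. For any weak IASI $f$ of $G_1\cup G_2$, its restrictions are weak IASIs of $G_1$, of $G_2$, and of $H:=G_1\cap G_2$, and because ``mono-indexed'' depends only on which end vertices are singleton, the mono-indexed edge set $M$ of $G_1\cup G_2$ decomposes as $M=M_1\cup M_2$ with $M_1\cap M_2=M_H$, where $M_1,M_2,M_H$ are the mono-indexed edges of $G_1,G_2,H$. Hence $|M|=|M_1|+|M_2|-|M_H|$ exactly. For the upper bound I would start from a labeling of $H$ realizing $\varphi(H)$ and extend it to labelings of $G_1$ and $G_2$ realizing $\varphi(G_1)$ and $\varphi(G_2)$, feeding these into the identity to produce a weak IASI of the union with exactly $\varphi(G_1)+\varphi(G_2)-\varphi(H)$ mono-indexed edges; for the lower bound I would take an optimal labeling of the union and apply the same identity.

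The main obstacle is precisely this simultaneous optimisation: one must guarantee that an optimal labeling of the intersection $H$ can be extended to optimal labelings of \emph{both} $G_1$ and $G_2$, and conversely that a global optimum on $G_1\cup G_2$ restricts to an optimal labeling of $H$. In the independent-set language this is the assertion that the restriction to $V(H)$ of an optimal family of large vertices for $G_1\cup G_2$ is again optimal for $H$, and that an optimal large-vertex set for $H$ survives as the common part of optimal large-vertex sets for $G_1$ and $G_2$. I expect to establish it by observing that the independence constraint in $G_1\cup G_2$, when restricted to $V(H)$, is exactly the independence constraint in $H$, so that choices of large vertices inside $H$ may be propagated outward into $G_1$ and $G_2$ without creating new mono-indexed edges across the interface; verifying that this propagation preserves optimality on all three pieces at once is the step I would treat most carefully.
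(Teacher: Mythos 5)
You have correctly identified where the difficulty lies, but the step you flag as delicate is not merely delicate --- it is a genuine gap, and the propagation claim you propose to establish is false. Your inclusion--exclusion identity is right: for any weak IASI $f$ of $G_1\cup G_2$, the mono-indexed edge sets satisfy $|M|=|M_1|+|M_2|-|M_H|$ where $H=G_1\cap G_2$. But note the sign: to minimize $|M|$ one wants $|M_H|$ \emph{large}, so extending a labeling of $H$ that realizes $\varphi(H)$ is exactly the wrong move for the upper bound, and for the lower bound the inequality runs the wrong way, since $|M_H|\ge\varphi(H)$ gives $-|M_H|\le-\varphi(H)$. Concretely, take $G_1$ and $G_2$ to be triangles on $\{a,b,c\}$ and $\{a,b,d\}$, so $H=K_2$ with $\varphi(H)=0$ and the stated formula predicts $\varphi(G_1\cup G_2)=1+1-0=2$. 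But labeling $a=\{1\}$, $b=\{2\}$, $c=\{3,4\}$, $d=\{5,6\}$ gives vertex labels that are distinct, edge labels $\{3\},\{4,5\},\{5,6\},\{6,7\},\{7,8\}$ that are distinct, and only the edge $ab$ mono-indexed; since the union contains a triangle, $\varphi(G_1\cup G_2)=1$. Here the globally optimal labeling restricts to labelings of $G_1$, $G_2$ that are individually optimal, yet its restriction to $H$ is \emph{not} optimal for $H$ --- the optimum deliberately mono-indexes the shared edge so that it is subtracted in your identity. So the restriction-to-$H$ optimality you hoped to propagate simply does not hold, and no choice of labels can rescue the formula in this example.

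For context: the paper itself contains no proof of this statement --- it is quoted as a preliminary result from reference [GS4] --- so there is no in-paper argument to compare against, and the statement as quoted is in fact false in the generality given, as the example above shows. It is worth observing that the only place the paper invokes the theorem (the cactus result) involves pieces meeting in at most one vertex, so that $E(H)=\emptyset$, $M_H=\emptyset$, and your identity correctly yields $\varphi(G_1\cup G_2)=\varphi(G_1)+\varphi(G_2)$ there (modulo the compatibility-on-shared-vertices issue in your admissibility step, which is handled by forcing shared vertices to carry singleton labels, a change that never destroys the weak condition). Your first two paragraphs --- the characterization $|A+B|=\max(|A|,|B|)$ iff one factor is a singleton, the reformulation via independent sets of non-singleton vertices, and the construction of a weak IASI on the union --- are sound; the honest conclusion your own framework supports is the identity $\varphi(G_1\cup G_2)=\min_f\bigl(|M_1|+|M_2|-|M_H|\bigr)$ together with the special case $E(G_1\cap G_2)=\emptyset$, not the unrestricted formula in the statement.
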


In this paper, we discuss first on the admissibility of weak IASI by certain graphs which contain cliques or independent sets or both.

\section{New Results}

In view of Theorem \ref{T-WKN}, we note that a complete graph can have at most one mono-indexed vertex. Therefore, we propose

\begin{proposition}\label{C-WKN}
The sparing number of a complete graph $K_n$ is equal to the number of triangles which contain the vertex that is not mono-indexed in $K_n$.
\end{proposition}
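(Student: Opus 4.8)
The plan is to pin down the structure of a weak IASI on $K_n$ that attains the sparing number, and then translate the resulting count of forced mono-indexed edges into a count of triangles. The starting observation, already noted before the statement, is that in any weak IASI the set of non-mono-indexed vertices (those with set-indexing number at least $2$) must form an independent set: if $u,v$ were adjacent with $|f(u)|,|f(v)|\ge 2$, then the inequality $|f(u)+f(v)|\ge |f(u)|+|f(v)|-1 > \max(|f(u)|,|f(v)|)$ would contradict the weak IASI condition on the edge $uv$. Since $K_n$ has no independent set of size greater than $1$, at most one vertex of $K_n$ can be non-mono-indexed.

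First I would compare the only two possible configurations. If every vertex is mono-indexed, then every edge joins two singletons and is itself mono-indexed, giving $\binom{n}{2}$ mono-indexed edges. If exactly one vertex $v$ is non-mono-indexed, then each of the $n-1$ edges incident with $v$ has set-indexing number $|f(v)|>1$ and so is not mono-indexed, while every edge among the remaining $n-1$ singleton-labelled vertices joins two singletons and is mono-indexed; this yields exactly $\binom{n-1}{2}$ mono-indexed edges. As $\binom{n-1}{2}<\binom{n}{2}$, the minimum is achieved precisely when there is one non-mono-indexed vertex, so $\varphi(K_n)=\binom{n-1}{2}$, provided such a labelling actually exists.

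Next I would exhibit a valid weak IASI realising this configuration, to confirm the bound is attained. Assign $f(v)=\{0,1\}$ and give the other $n-1$ vertices distinct singleton labels drawn from a Sidon set (a set of integers with all pairwise sums distinct, which exists in every size). Injectivity of $f^+$ then follows by inspecting the two edge types: sums of two singletons are pairwise distinct by the Sidon property, the labels $\{a_i,a_i+1\}$ on the edges at $v$ are distinct because the $a_i$ are, and the two families are distinguished by cardinality. This shows a weak IASI with one non-mono-indexed vertex and exactly $\binom{n-1}{2}$ mono-indexed edges genuinely exists.

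Finally I would recast $\binom{n-1}{2}$ as the number of triangles through $v$. In $K_n$ each triangle containing $v$ is determined by its edge opposite $v$, and conversely each edge $u_iu_j$ not incident with $v$ completes a unique triangle $\{v,u_i,u_j\}$; this is a bijection between the triangles through $v$ and the edges avoiding $v$. Since the edges avoiding $v$ are exactly the mono-indexed edges counted above, the sparing number equals the number of triangles of $K_n$ that contain the non-mono-indexed vertex, as claimed. The only step requiring real care is the optimality argument, namely verifying that no labelling can beat the one-non-mono-indexed-vertex configuration; this is precisely where the independent-set bound from the first paragraph carries the weight, and everything after it is bookkeeping.
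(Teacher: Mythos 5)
Your proposal is correct, and its endgame is the same as the paper's: both arguments rest on the bijection between triangles of $K_n$ through the unique non-mono-indexed vertex $v$ and the edges of $K_n$ avoiding $v$, giving the count $\binom{n-1}{2}=\tfrac{1}{2}(n-1)(n-2)=\varphi(K_n)$. Where you differ is in how the value $\varphi(K_n)=\binom{n-1}{2}$ is supplied. The paper's proof is two lines: it counts the triangles through $v$ and then simply cites Theorem \ref{T-WKN} (from \cite{GS3}) for the sparing number. You instead re-derive that value from scratch: the sumset inequality $|A+B|\ge |A|+|B|-1$ shows that non-mono-indexed vertices must form an independent set, hence $K_n$ has at most one; comparing the only two configurations yields the lower bound $\binom{n-1}{2}$; and the explicit labelling with $f(v)=\{0,1\}$ and distinct Sidon-set singletons elsewhere verifies that a genuine weak IASI attaining the bound exists. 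Your version is longer but buys real rigor that the paper leaves implicit: Theorem \ref{T-WKN} as stated speaks of the \emph{maximum} number of mono-indexed edges, and the paper uses it as though it directly gave the minimum (the sparing number); your optimality argument plus construction closes exactly that gap, and it also justifies the tacit assumption in the statement that an optimal labelling has exactly one non-mono-indexed vertex. The paper's proof, in exchange, is shorter and modular, deferring existence and optimality of the labelling to the earlier reference.
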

\begin{proof}
Every pair among the $n-1$ edges incident on every vertex $v$ of a complete graph $K_n$ form a triangle with one edge of $K_n$ which does not incident on $v$. Hence, the number of triangles containing $v$ is $\binom{n-1}{2}=\frac{1}{2}(n-1)(n-2)$. Therefore, by Theorem \ref{T-WKN}, the number of triangles containing the single vertex $v$, that is not mono-indexed, is $\varphi(K_n)$. 
\end{proof}

In the following discussion, we study the sparing number of certain graph one of whose components is a complete graph. Now, recall the definition of a sun graph.

\begin{definition}{\rm
\cite{BLS} An {\em $n$-sun} or a {\em trampoline}, denoted by $S_n$,  is a chordal graph on $2n$ vertices, where $n\ge 3$, whose vertex set can be partitioned into two sets $U = \{u_1,u_2,u_3,\ldots, u_n\}$ and $W = \{w_1,w_2,w_3,\ldots, w_n\}$ such that $W$ is an independent set of $G$ and $w_j$ is adjacent to $u_i$ if and only if $j=i$ or $j=i+1~(mod ~ n)$. {\em A complete sun} is a sun $G$ where the induced subgraph $\langle U \rangle$ is complete.} 
\end{definition}

\begin{theorem}\label{T-SNKSG1}
The sparing number of a complete sun graph $S_{n}$ is $\frac{1}{2}(n^2-3n+6)$.
\end{theorem}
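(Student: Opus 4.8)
The plan is to prove this by establishing matching upper and lower bounds on the number of mono-indexed edges in any weak IASI of the complete sun $S_n$. First I would fix the structure: the vertices $U=\{u_1,\ldots,u_n\}$ induce a complete graph $K_n$, while $W=\{w_1,\ldots,w_n\}$ is independent, each $w_j$ being adjacent only to the consecutive pair $u_{j-1},u_j$ (indices $\bmod\, n$). The key structural input is Theorem \ref{T-WKN} together with Proposition \ref{C-WKN}: in any weak IASI, at most one vertex of the clique $\langle U\rangle$ may fail to be mono-indexed, so at least $n-1$ of the $u_i$ carry singleton labels, forcing at least $\binom{n-1}{2}$ mono-indexed edges inside $\langle U\rangle$.

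For the upper bound I would exhibit a construction. Choose $u_1$ as the unique non-mono-indexed vertex of $U$ and give $u_2,\ldots,u_n$ singleton labels. Since $u_1$ is adjacent (within $W$) only to $w_1$ and $w_2$, the weak IASI condition forces $w_1,w_2$ to be mono-indexed; I would then assign every remaining $w_j$ with $j\ge 3$ a non-singleton label. Under this labeling the only mono-indexed edges are the $\binom{n-1}{2}$ clique edges avoiding $u_1$, together with $w_1u_n$ and $w_2u_2$. This yields $\binom{n-1}{2}+2=\tfrac12(n^2-3n+6)$ mono-indexed edges, and I would note that injective set-labels realizing this cardinality pattern can be selected in the standard way, so that $f$ and $f^+$ are genuinely injective.

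For the lower bound I would argue by cases on the number of non-mono-indexed vertices of $U$. If all of $U$ is mono-indexed, then all $\binom{n}{2}$ clique edges are mono-indexed, and since $\binom{n}{2}-\big(\binom{n-1}{2}+2\big)=n-3\ge 0$ for $n\ge 3$, the count already meets or exceeds the target. If exactly one vertex $u_r$ is non-mono-indexed, then its two $W$-neighbors $w_r,w_{r+1}$ must be mono-indexed, whence the edges $w_ru_{r-1}$ and $w_{r+1}u_{r+1}$ joining them to their mono-indexed $U$-neighbors are forced to be mono-indexed; combined with the $\binom{n-1}{2}$ forced clique edges this again gives at least $\binom{n-1}{2}+2$ mono-indexed edges. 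In either case the sparing number is at least $\tfrac12(n^2-3n+6)$, matching the construction.

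The main obstacle will be the lower-bound case analysis — specifically, verifying that in the single-non-mono-indexed-vertex case exactly two (and not fewer) $W$-incident mono-indexed edges are unavoidable, and checking that the forced edges $w_ru_{r-1}$ and $w_{r+1}u_{r+1}$ are genuinely distinct, which uses $n\ge 3$. The remainder is bookkeeping together with the routine existence of admissible injective labels.
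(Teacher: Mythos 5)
Your proof is correct and follows essentially the same route as the paper: Theorem \ref{T-WKN} restricts the clique $\langle U\rangle$ to at most one non-singleton vertex, the two sun vertices adjacent to that vertex force exactly two additional mono-indexed edges giving $\binom{n-1}{2}+2=\frac{1}{2}(n^2-3n+6)$, and the alternative of labeling $W$ with non-singleton sets is dismissed by comparing this with $\binom{n}{2}$. Your explicit upper-bound/lower-bound framing is in fact slightly more rigorous than the paper's informal comparison of two labeling orders, and your careful bookkeeping avoids the paper's concluding slip, where a typo asserts $\varphi(S_n)=\frac{1}{2}n(n-1)$ in contradiction with the theorem statement itself.
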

\begin{proof}
Let $U$ and $W$ be the partitions of $V(S_n)$, where $W$ is an independence set. Since $S_n$ is a complete sun, $\langle U \rangle=K_n$. Let $f$ be a weak IASI on $S_n$. 

If we first label the vertices of $U$, then,  since $ \langle U \rangle =K_n$, by Theorem \ref{T-WKN}, exactly one vertex, say $u$, in $U$ can have a non-singleton set-label and the sparing number of $\langle U \rangle$ is $\frac{1}{2}(n-1)(n-2)$. Then, two vertices in $W$ that are adjacent to the vertex $u\in U$ must be mono-indexed and the other two edges incident on these vertices of $W$ (but not on $u$) are also mono-indexed. Therefore, the number of mono-indexed edges in $S_n$, in this case, is $\frac{1}{2}(n-1)(n-2)+2 = \frac{1}{2}(n^2-3n+6)$. 

If we label the vertices of $W$ first, then, since $W$ is an independent set, we can label all the vertices of $W$ by distinct non-singleton sets. Therefore, all the vertices of the component $\langle U \rangle$ must be mono-indexed. In this case, the total number of mono-indexed edges in $S_n$, in this case, is $\frac{1}{2}n(n-1)$. 

For any positive integer $n\ge 3$, we have $n^2-3n+6 \le n^2-n$.  Hence, we have $\varphi(S_n)=\frac{1}{2}n(n-1)$.
\end{proof}

\noindent Let us now consider the class of  split graphs, defined as follows.

\begin{definition}{\rm
\cite{BLS} A {\em split graph} is a graph in which the vertices can be partitioned into a clique $K_r$ and an independent set $S$. A split graph is said to be a {\em complete split graph} if every vertex of the independent set $S$ is adjacent to every vertex of the the clique $K_r$ and is denoted by $K_S(r,s)$, where $r$ and $s$ are the orders of $K_r$ and $S$ respectively. }
\end{definition}

\noindent The following result discusses the sparing number of a split graph.

\begin{theorem}\label{T-SNSG}
The sparing number of a split graph $G$ $G$ is equal to the number of triangles in $G$ containing the vertex that is not mono-indexed in its clique. 
\end{theorem}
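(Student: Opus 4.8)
The plan is to extend the triangle-counting argument of Proposition~\ref{C-WKN} from the clique to the whole split graph. Let $U$ induce the clique $K_r$ and let $S$ be the independent set, and let $f$ be any weak IASI of $G$. By Theorem~\ref{T-WKN}, the subgraph $\langle U\rangle=K_r$ admits at most one non-mono-indexed vertex, say $v$. First I would fix a labelling in which $v\in U$ is the unique non-mono-indexed vertex of the clique and every other vertex of $U$ is mono-indexed; as in Proposition~\ref{C-WKN}, this makes exactly the $\binom{r-1}{2}$ clique edges that avoid $v$ mono-indexed, and these correspond bijectively to the triangles of $\langle U\rangle$ through $v$.

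Next I would label the independent set. Since a weak IASI forces at least one end of every edge to be mono-indexed, any $s\in S$ adjacent to the non-mono-indexed vertex $v$ must itself be mono-indexed, whereas any $s\in S$ not adjacent to $v$ may be given a non-singleton label, because all of its clique-neighbours are then mono-indexed and none of its incident edges becomes mono-indexed. Carrying out this labelling, the only additional mono-indexed edges are those joining a forced-mono-indexed vertex $s\in S$ (that is, $s$ adjacent to $v$) to a clique vertex $u\neq v$.

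The key step is the bijection. Each such edge $us$ together with $v$ spans a triangle $\{v,u,s\}$: the edge $uv$ lies in the clique, the edge $sv$ exists because $s$ is adjacent to $v$, and $su$ is the edge itself; conversely every triangle of $G$ through $v$ that meets $S$ is of this form and, since $S$ is independent, contains exactly one vertex of $S$. Combining this with the clique correspondence of the first paragraph, the mono-indexed edges of $G$ are in bijection with the triangles of $G$ containing $v$, so their number equals the number of triangles through $v$.

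Finally, to obtain the sparing number I would minimise over the choice of the non-mono-indexed vertex, taking $v$ to be a clique vertex through which the fewest triangles pass. The step I expect to be the main obstacle is verifying optimality: one must check that this choice cannot be beaten, in particular that the competing strategy of mono-indexing every clique vertex (so that all of $S$ may be non-mono-indexed, contributing $\binom{r}{2}$ mono-indexed clique edges) never yields fewer mono-indexed edges for the split graphs under consideration. Reconciling these two labellings — exactly the comparison made for the complete sun in Theorem~\ref{T-SNKSG1} — is the delicate point, and it is what pins down which vertex is the one left non-mono-indexed.
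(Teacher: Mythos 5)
Your construction is the same as the paper's: choose the clique vertex $v$ through which the fewest triangles pass, give it the unique non-singleton label in $K_r$, mono-index the rest of the clique and the $S$-neighbours of $v$, and give non-singleton labels to the remaining vertices of $S$; the mono-indexed edges then biject with the triangles of $G$ through $v$ (the $\binom{r-1}{2}$ clique triangles and, since $S$ is independent, one mixed triangle $\{v,u,s\}$ per mono-indexed cross edge $us$). Up to this point your argument is, if anything, more careful than the paper's, which obtains the same count $\eta$ by subtracting and re-adding $\frac{(r-1)(r-2)}{2}$.

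The step you flag as the delicate point, however, is a genuine gap — and the paper's proof does not close it either: it simply counts the mono-indexed edges of this one labelling and declares the result to be $\varphi(G)$, with no lower bound showing that no other labelling does better. Worse, your suspicion that the competing labelling (mono-index all of $U$, give all of $S$ non-singleton labels, costing $\binom{r}{2}$ edges) might win is correct, so the comparison cannot be made to come out in favour of the triangle count in general. Take the complete split graph $K_S(r,s)$: every clique vertex lies in $\binom{r-1}{2}+s(r-1)$ triangles, which exceeds $\binom{r}{2}$ by exactly $(r-1)(s-1)$, so for $s\ge 2$ the sparing number is at most $\binom{r}{2}$, strictly below the triangle count through any clique vertex. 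This agrees with the paper's own Theorem \ref{T-SNKSG}, which gives $\varphi(K_S(r,s))=\frac{1}{2}r(r-1)$, and contradicts Theorem \ref{T-SNSG} as stated (note also that in the optimal labelling there, no clique vertex is non-mono-indexed, so the phrase ``the vertex that is not mono-indexed in its clique'' refers to nothing). So the missing optimality step is not merely unproven but false without further hypotheses; the correct statement is a minimum of the two quantities, $\varphi(G)\le\min\bigl(\min_{v\in U} t(v),\,\binom{r}{2}\bigr)$ with $t(v)$ the number of triangles through $v$, and neither your proposal nor the paper supplies a matching lower bound.
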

\begin{proof}
Let $K_r$ be the clique and $S$ be the independent set in the split graph $G$. Let $\{u_1,u_2,\ldots,u_r\}$ be the vertex set of $K_r$ and $S=\{v_1,v_2,\ldots, v_l\}$.  By Theorem \ref{T-WKN}, $K_r$ can have at most one  vertex that is not mono-indexed and its sparing number is $\frac{1}{2}(r-1)(r-2)$. Choose the vertex $u$ of $K_r$ which is contained in minimum number of triangles with one vertex in $S$,  to label with a non-singleton set. Now let $\mathit{\eta}$ be the number of triangles that contain the vertex $u$. If there exists a triangle incident on $u_i$ with one vertex $v_j$ in $S$, then the edge $v_ju_k$ of the triangle $u_iv_ju_k$ must be mono-indexed. By Corollary \ref{C-WKN}, the number of triangles incident on $u$ in $K_r$ is $^{r-1}C_2=\varphi(K_r)$. Therefore, the number of triangles incident on $u$ with one vertex in $S$ is $\mathit{\eta}-\frac{(r-1)(r-2)}{2}$. Hence, the number of mono-indexed edges in $G$ is  $\varphi(G)=\frac{(r-1)(r-2)}{2}+\mathit{\eta}-\frac{(r-1)(r-2)}{2} = \mathit{\eta}$. This completes the proof.
\end{proof}

The following theorem estimates the sparing number of a complete split graph $G$.

\begin{theorem}\label{T-SNKSG}
The sparing number of a complete split graph is equal to the sparing number of the maximal clique in it.
\end{theorem}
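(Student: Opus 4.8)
The plan is to prove the two inequalities $\varphi(K_S(r,s)) \le \varphi(K_{r+1})$ and $\varphi(K_S(r,s)) \ge \varphi(K_{r+1})$, after first identifying the maximal clique. Since every vertex of the independent set $S$ is adjacent to every vertex of $K_r$, adjoining any one vertex $v_j \in S$ to $K_r$ yields a clique on $r+1$ vertices; as $S$ is independent, no larger clique exists, so the maximal clique is $K_{r+1}$ and, by Theorem~\ref{T-WKN}, $\varphi(K_{r+1}) = \frac{1}{2}r(r-1)$.

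For the upper bound I would construct an explicit weak IASI. Because $S$ is independent, I can assign distinct non-singleton set-labels to all of its vertices; every clique vertex, being adjacent to a vertex of $S$, is then forced to be mono-indexed. Under this labeling the only edges both of whose end vertices are mono-indexed are the $\binom{r}{2}$ edges inside $K_r$, so this weak IASI has exactly $\frac{1}{2}r(r-1)$ mono-indexed edges and $\varphi(K_S(r,s)) \le \frac{1}{2}r(r-1)$.

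For the lower bound, I would first record the structural fact that in any weak IASI the non-mono-indexed vertices form an independent set, since two adjacent vertices cannot both carry non-singleton labels. Classifying the independent sets of $K_S(r,s)$: the clique condition allows at most one vertex of $K_r$, and the complete-split adjacency forbids a clique vertex together with any vertex of $S$; hence the non-mono-indexed set is either a single clique vertex or a subset of $S$. Counting the edges with both end vertices mono-indexed gives $\frac{1}{2}(r-1)(r-2) + (r-1)s$ in the first case and $\frac{1}{2}r(r-1) + r(s-t)$ when $t$ vertices of $S$ are left non-mono-indexed, and a short comparison shows each is at least $\frac{1}{2}r(r-1)$ whenever $s \ge 1$. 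Together with the upper bound this gives $\varphi(K_S(r,s)) = \frac{1}{2}r(r-1) = \varphi(K_{r+1})$.

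The main obstacle I anticipate is resisting a direct appeal to Theorem~\ref{T-SNSG}: that result places the non-singleton label on a clique vertex, which is precisely the first (suboptimal) case above and overcounts whenever $s > 1$. The real content is that labeling the independent set $S$, rather than a clique vertex, is what realizes the sparing number of the maximal clique $K_{r+1}$.
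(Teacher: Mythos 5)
Your proof is correct, and it is genuinely more rigorous than the paper's own argument. The paper simply computes the number of mono-indexed edges under two specific labeling schemes --- label the clique first (giving $\frac{1}{2}(r-1)(r-2)+s(r-1)=\frac{1}{2}(r-1)(r+2s)$) or label the independent set $S$ first (giving $\frac{1}{2}r(r-1)$) --- and takes the smaller value, without ever arguing that these two schemes exhaust all possible weak IASIs. Your upper bound coincides with the paper's second scheme, but your lower bound supplies exactly what the paper omits: since the non-mono-indexed vertices must form an independent set, and the independent sets of $K_S(r,s)$ are precisely a single clique vertex or a subset of $S$, your case analysis (counts $\frac{1}{2}(r-1)(r-2)+(r-1)s$ and $\frac{1}{2}r(r-1)+r(s-t)$, each at least $\frac{1}{2}r(r-1)$ when $s\ge 1$) covers every admissible labeling, including the partial labelings $0<t<s$ that the paper never considers. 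You also make explicit what the paper leaves implicit, namely that the maximal clique is $K_{r+1}$ rather than $K_r$, so that by Theorem~\ref{T-WKN} its sparing number is $\frac{1}{2}r(r-1)$, matching the computed value. Your closing remark is astute: applying Theorem~\ref{T-SNSG} to $K_S(r,s)$ would yield $(r-1)\bigl(\frac{r-2}{2}+s\bigr)$, which contradicts the present theorem whenever $s\ge 2$; this tension is real --- the proof of Theorem~\ref{T-SNSG} forces the non-singleton label onto a clique vertex, which your analysis shows is suboptimal --- so your decision to argue from scratch rather than cite it is not merely cautious but necessary.
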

\begin{proof}
Let $U=\{u_1,u_2,u_3,\ldots,u_r\}$ and $S=\{w_1,w_2, w_3,\ldots,w_s\}$, where $\langle U\rangle =K_r$ and $W$ is an independent set in $G$. 

If the vertices of $U$ are labeled first, then by Theorem \ref{T-WKN}, exactly one vertex of $U$, say $u$, can have a non-singleton set-label and the number of mono-indexed edges in $\langle U \rangle =K_r$ is $\frac{1}{2}(r-1)(r-2)$. Since every vertex of $S$ is adjacent to all vertices of $U$, each vertex $w_j, 1\le j \le s$, in $S$ must be mono-indexed. Therefore, all the edges incident on $w_j$, except the edge $uw_j$ are mono-indexed. That is, there exist exactly $n-1$ mono-indexed edges in $G$ corresponding to each vertex in $S$. Hence, the number of mono-indexed edges between $U$ and $S$ is $s(r-1)$. Therefore, the total number of mono-indexed edges in $G$ is $\frac{1}{2}(r-1)(r-2)+s(r-1)=\frac{1}{2}(r-1)(r+2s)$.

If the vertices of $S$ are labeled first, all vertices of $S$ can be labeled by distinct non-singleton sets, since $S$ is an independent set of $G$. Hence, every vertex of $U$ must be mono-indexed. Hence, the number of mono-indexed edges in $\langle U \rangle$ is $\frac{r(r-1)}{2}$ there is no mono-indexed edges between $U$ and $S$. Hence, the total number of mono-indexed edges in this case is $\frac{r(r-1)}{2}$. 

For any positive integer $s, r<r+2s$ and hence $\frac{1}{2}r(r-1)<\frac{1}{2}(r-1)(r+2s)$. Hence, $\varphi(K_S(r,s))=\frac{1}{2}r(r-1)$. This completes the proof.
\end{proof}

\noindent Next, consider the definition of a bisplit graph.

\begin{definition}{\rm
\cite{BHLL} An undirected graph $G$ is a {\em bisplit graph} if its vertex set $V$ can be partitioned into three independent sets $X,Y {\text{and}}~ Z$ such that $Y\cup Z$ induces a complete bipartite subgraph (a bi-clique) in $G$.}
\end{definition}

\begin{proposition}\label{T-SNBSG1}
Let $G$ be a bisplit graph and let $X, Y, Z$  be the three partitions of $V(G)$. Then, the sparing number of $G$ is the number of paths of length $2$ with internal vertex in the set with the least cardinality.
\end{proposition}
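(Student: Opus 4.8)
The plan is to first convert the labelling question into a combinatorial optimisation over the fixed host graph. For any weak IASI $f$ and any edge $uv$, the elementary bound $|f(u)+f(v)|\ge |f(u)|+|f(v)|-1$ for finite sets of integers forces at least one of $f(u),f(v)$ to be a singleton; hence the vertices carrying non-singleton set-labels form an independent set, and an edge is mono-indexed exactly when \emph{both} of its ends are singleton-indexed. Conversely, any independent set may be assigned non-singleton labels while the remaining vertices receive singletons, with the integers chosen generically so that $f$ and $f^{+}$ stay injective. I would establish this equivalence first, since it gives the crisp form $\varphi(G)=|E(G)|-\max_{I}\sum_{v\in I}\deg(v)$, the maximum being over independent sets $I$: minimising mono-indexed edges is the same as choosing an independent set of non-singleton vertices that is incident to as many edges as possible.

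Next I would bring in the bisplit structure $V=X\cup Y\cup Z$. Because $Y\cup Z$ induces a biclique, any independent set can meet at most one of $Y$ and $Z$, which is the decisive structural constraint. I would therefore compare the labellings that keep exactly one stable set singleton-indexed (mono) while pushing non-singleton labels onto the other two as far as the biclique allows, and then try to show that the surviving mono-indexed edges are precisely the ``third sides'' that, with a vertex $v$ of the retained set, close a path of length $2$: each admissible pair of neighbours of $v$ that is forced to remain singleton-indexed yields one mono-indexed edge. If this correspondence holds, the count over the retained set is $\sum_{v}\binom{\deg v}{2}$, i.e. the number of paths of length $2$ internally based there, and since fewer internal vertices produce fewer such paths, the minimum is attained when the retained set is the one of least cardinality — which is exactly the set named in the statement.

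The step I expect to be the genuine obstacle is optimality, together with pinning down the count precisely. The biclique only disposes of the $Y$–$Z$ interaction; the adjacencies of $X$ with $Y$ and with $Z$ are unrestricted, so I must rule out hybrid independent sets that combine some vertices of $X$ with part of $Y$ (or $Z$) and beat all three ``one full stable set'' templates. In the reduction language this is a degree-weighted maximum independent set problem, and I would attack it by an exchange/shifting argument: starting from an optimal $I$ and moving its vertices onto a single stable set one part at a time, showing the number of uncovered edges never increases because every edge re-exposed by such a move is already accounted for by a path of length $2$ through the target set. Making this monotonicity rigorous, verifying that the forced mono-indexed edges really are in bijection with the claimed paths, and checking the boundary cases (one of $X,Y,Z$ empty, or two parts of equal cardinality, where the bipartite instances must collapse to $\varphi=0$ in accordance with Theorem \ref{T-WUOC1}) is where the real work — and the real danger of the formula over- or under-counting — lies.
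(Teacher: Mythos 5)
Your opening reduction is correct and is genuinely different from (and sharper than) what the paper does: since every edge of a weak IASI graph must have a singleton end, the non-singleton vertices form an independent set $I$, an edge is mono-indexed precisely when it avoids $I$, and hence $\varphi(G)=|E(G)|-\max_{I}\sum_{v\in I}\deg(v)$ over independent sets $I$. The paper has no such global formulation and no optimality argument at all: its proof is a single construction --- label the largest part $Z$ with non-singleton sets and $Y$ with singletons, then treat $X$ piecewise, giving non-singleton labels to vertices of $X$ adjacent to $Y$ alone, singletons to those adjacent to $Z$ alone, and observing that a vertex $v\in X$ adjacent to both some $u\in Y$ and some $w\in Z$ is forced to be singleton, so each edge $uv$ at such a hybrid vertex is mono-indexed. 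It then loosely identifies these edges with ``paths of length $2$'' and stops, never showing the construction is optimal. So on the axis of rigor your plan is the more ambitious one.

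The genuine gap is in the counting step you flag, and your own suspicion about over-counting is exactly right: the identity you hope to verify, $\varphi=\sum_{v}\binom{\deg v}{2}$ over the smallest part, is false, and so is the proposition as literally stated. Take $X=\{x\}$ with $x$ adjacent to $y_1,y_2\in Y$ and to no vertex of $Z$. There is one path $y_1xy_2$ of length $2$ with internal vertex in $X$, yet $I=\{x\}\cup Z$ is independent and is incident to every edge of $G$, so by your own reduction $\varphi(G)=0\neq 1$; indeed the paper's construction also assigns $x$ a non-singleton label and produces zero mono-indexed edges here, contradicting the statement it purports to prove. Even restricted to $Y$--$X$--$Z$ paths the bijection fails: a vertex $v\in X$ with $a$ neighbours in $Y$ and $b\ge 2$ in $Z$ lies on $ab$ such paths but contributes only $a$ mono-indexed edges. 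What the paper's labeling actually yields is $\sum a(v)$, summed over vertices $v\in X$ having at least one neighbour in $Z$, where $a(v)$ is the number of $Y$-neighbours of $v$ --- and no exchange argument can make this equal the path count, since the discrepancy occurs already in the extremal labeling itself. The salvageable program is precisely your reduction: evaluate $\max_{I}\sum_{v\in I}\deg(v)$ over independent sets (which, as you note, meet at most one of $Y,Z$), and prove a corrected formula; as it stands, the proposal's target identity would fail at the verification step you correctly identified as the danger point.
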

\begin{proof}
Without loss of generality, assume that $|X|\le |Y|\le |Z|$. Then, label all the vertices of $Z$ by distinct non-singleton sets  and label the vertices of $Y$ by distinct singleton sets. Then, no edges between $Y$ and $Z$ are mono-indexed. 

Some vertices of $X$ are adjacent to some vertices in $Y$ or some vertices in $Z$ or some vertices in both. The vertices in $X$, which are adjacent to the vertices in $Y$ alone, can be labeled by distinct non-singleton sets which have not already been used for labeling the vertices in $G$. The vertices of $X$, that are adjacent to the vertices in $Z$ alone, can be labeled by distinct singleton sets that are not used before for labeling the vertices in $G$. We note that no edge labeled so far is a mono-indexed edge. 

Assume that a vertex, say $v$ of $X$ is adjacent to a vertex, say $u$, in $Y$ and to a vertex, say $w$ in $Z$. Clearly, $uvw$ is a path of length $2$ with $v$ as an internal vertex. Since $w$ is not mono-indexed, $v$ can only be labeled by a singleton set, not already used for labeling any vertex in $G$. Hence, the edge $uv$ is mono-indexed. Therefore, each path of length $2$ with its internal vertex in $X$ has a mono-indexed edge. This completes the proof.  
\end{proof}

\begin{proposition}\label{T-SNKBSG1}
Let $G$ be a complete bisplit graph and let $X, Y, Z$  be the three partitions of $V(G)$. The sparing number of $G$ is the product of the cardinalities of two of these sets with minimum cardinality.
\end{proposition}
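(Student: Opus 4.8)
The plan is to reduce the statement to a purely combinatorial covering problem and then optimise over the structure of the complete tripartite graph. First I would observe that a complete bisplit graph, in which $X$ is completely joined to $Y\cup Z$ in addition to $Y\cup Z$ being a bi-clique, is precisely the complete tripartite graph $K_{|X|,|Y|,|Z|}$: any two vertices lying in different parts are adjacent and each of $X,Y,Z$ is independent. Recalling that in a weak IASI an edge is mono-indexed exactly when both of its end vertices are mono-indexed, and that at least one end vertex of every edge must be mono-indexed, the set of non-mono-indexed (non-singleton) vertices is forced to be an independent set. Hence minimising the number of mono-indexed edges amounts to choosing an independent set $I$ of vertices to carry non-singleton labels so that the number of edges having both ends outside $I$ is as small as possible.

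Next I would exploit the fact that in $K_{|X|,|Y|,|Z|}$ every independent set is contained in a single part. Assuming without loss of generality that $|X|\le |Y|\le |Z|$, enlarging $I$ within its part only removes further edges from the mono-indexed count, so it is optimal to take $I$ to be an entire part. Choosing $I$ to be $X$, $Y$ or $Z$ leaves, respectively, $|Y|\,|Z|$, $|X|\,|Z|$ or $|X|\,|Y|$ edges both of whose ends are mono-indexed; since $|X|\le|Y|\le|Z|$, the smallest of these is $|X|\,|Y|$, obtained by assigning the non-singleton labels to the largest part $Z$. This yields the candidate value $\varphi(G)=|X|\,|Y|$, the product of the cardinalities of the two smallest parts.

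To complete the argument I would split it into the two usual inequalities. For the upper bound I would exhibit the explicit labeling suggested above --- distinct non-singleton sets on $Z$ and distinct singletons on $X\cup Y$ --- and check, exactly as in the proof of Proposition \ref{T-SNBSG1}, that it is a genuine weak IASI: because $\mathbb{N}_0$ is infinite the injectivity of $f$ and of $f^+$ can always be arranged, and the only mono-indexed edges are the $|X|\,|Y|$ edges between $X$ and $Y$. The part I expect to be the main obstacle is the matching lower bound, namely that no weak IASI of $G$ can have fewer than $|X|\,|Y|$ mono-indexed edges. This is exactly where the structural reduction pays off: since the non-mono-indexed vertices form an independent set and therefore lie inside a single part, the preceding optimisation shows that at least $\min(|X||Y|,\,|Y||Z|,\,|Z||X|)=|X|\,|Y|$ edges must remain mono-indexed, so $\varphi(G)\ge |X|\,|Y|$. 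Combining the two bounds gives the claim.
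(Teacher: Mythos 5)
Your proof is correct, and it takes a genuinely stronger route than the paper's. The paper's proof consists only of the construction you give in your upper bound: label $Z$ (the largest part) with distinct non-singleton sets, $Y$ with singletons, observe that every vertex of $X$ is then forced to be mono-indexed, and count the $|X|\,|Y|$ mono-indexed edges between $X$ and $Y$. It never argues that no other labeling does better, so as written it establishes only $\varphi(G)\le |X|\,|Y|$ (unlike the proofs of Theorems \ref{T-SNKSG1} and \ref{T-SNKSG}, which at least compare two candidate labelings). Your reduction supplies exactly the missing half: since a weak IASI forces at least one singleton end on every edge, the non-singleton vertices form an independent set $I$; in a complete tripartite graph every independent set lies inside a single part; enlarging $I$ to that whole part never increases the count of edges with both ends outside $I$; hence every weak IASI leaves at least $\min(|X||Y|,\,|Y||Z|,\,|Z||X|)=|X|\,|Y|$ mono-indexed edges. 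This makes the lower bound rigorous rather than implicit, and as a bonus your optimisation-over-independent-sets framework immediately generalises to complete $k$-partite graphs (delete the largest part, count the remaining edges), whereas the paper's construction-and-count argument would have to be redone case by case. The only point to keep an eye on is the brief appeal to injectivity of $f$ and $f^+$; your remark that this can be arranged over the infinite ground set $\mathbb{N}_0$ is at the same level of rigour as the paper itself.
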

\begin{proof}
Without loss of generality, assume that $|X|\le |Y|\le |Z|$. Then, label all the vertices of $Z$ by distinct non-singleton sets  and label the vertices of $Y$ by distinct singleton sets. No edges between $Y$ and $Z$ are mono-indexed. Since $G$ is a complete bisplit graph, every vertex of $X$ must be adjacent to all the vertices of $Y$ and $Z$. Hence, every vertex of $X$ must be mono-indexed. Hence, all edges between $X$ and $Y$ are mono-indexed and no edges between $X$ and $Z$ are mono-indexed. Therefore, the number of mono-indexed edges in $G$ is the number of edges between $X$ and $Y$. Since $X$ and $Y$ are independent sets, the maximum number of edges between $X$ and $Y$ is $|X|~|Y|$. Therefore, $\varphi(G)=|X|\,|Y|$.
\end{proof}

Note that a complete bisplit graph is a complete tripartite graph. Hence, we rewrite Theorem \ref{T-SNKBSG1} as

\begin{theorem}
The sparing number of a complete tripartite graph is the product of the cardinalities of the two sets having minimum cardinality in its tripartition.
\end{theorem}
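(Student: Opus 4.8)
The plan is to reduce this statement directly to the already-established Proposition~\ref{T-SNKBSG1}, since the text has just observed that a complete bisplit graph and a complete tripartite graph are the same object. First I would make that identification precise: given a complete tripartite graph with partition $(V_1,V_2,V_3)$, each $V_i$ is an independent set, and every vertex in $V_i$ is adjacent to every vertex outside $V_i$. Choosing any two of the three parts, say $V_2$ and $V_3$, the induced subgraph on $V_2\cup V_3$ is exactly the complete bipartite graph $K_{|V_2|,|V_3|}$, and the third part $V_1$ is an independent set every vertex of which is joined to all of $V_2\cup V_3$. This is precisely the defining condition of a complete bisplit graph with $X=V_1$, $Y=V_2$, $Z=V_3$, so the two graph classes coincide.

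Once the identification is set up, the remaining work is purely notational. I would invoke Proposition~\ref{T-SNKBSG1}, which gives $\varphi(G)=|X|\,|Y|$ when $|X|\le|Y|\le|Z|$; that is, the sparing number equals the product of the cardinalities of the two smallest parts. Relabelling $X,Y,Z$ as the tripartition and observing that the product $|X|\,|Y|$ is exactly the product of the two parts of minimum cardinality yields the claimed formula. The only genuine content is ensuring the roles are matched up correctly: in the bisplit formulation it was the two sets of smallest cardinality whose product appeared (after the normalisation $|X|\le|Y|\le|Z|$), so I would state explicitly that among the three parts of the tripartition, one chooses the two with the smallest cardinalities and takes their product.

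The main (and essentially the only) obstacle here is a conceptual rather than a technical one: verifying that the class of complete bisplit graphs is genuinely identical to the class of complete tripartite graphs, so that the earlier proposition applies verbatim. The forward inclusion is immediate as sketched above; for the reverse, I would check that any complete bisplit graph, with $Y\cup Z$ inducing a biclique and $X$ an independent set joined completely to $Y\cup Z$, has all three of $X,Y,Z$ pairwise-completely-joined and internally independent, hence is complete tripartite. Since there are no further estimates or optimisation steps beyond what Proposition~\ref{T-SNKBSG1} already supplies, the proof is short: establish the equivalence of the two descriptions, then cite the proposition and read off the formula.
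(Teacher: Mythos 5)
Your proposal is correct and takes essentially the same route as the paper, which offers no separate proof at all: it simply remarks that a complete bisplit graph is a complete tripartite graph and restates Proposition~\ref{T-SNKBSG1} as the theorem. Your only addition is to verify the bisplit--tripartite identification explicitly in both directions, which the paper leaves as a one-line observation.
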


\noindent Let us next consider the class of block graphs, defined as follows.

\begin{definition}{\rm
\cite{FH} A graph is called a {\em block graph} or a {\em clique tree} if it is connected and every block is a clique. A graph is a block graph if it can be constructed from a tree by replacing every edge by a clique of arbitrary size, with at most one vertex in common.}
\end{definition}

\noindent The following result discusses about the sparing number of the block graphs.

\begin{theorem}
Let $G$ be a block graph. Then, $G$ admits a weak IASI and its sparing number $G$ is $\frac{1}{2}\sum_{i=1}^r(n_i-1)(n_i-2)$, where $r$ is the number of cliques in $G$ and $n_i$ is the order of the $i$-th clique in $G$.
\end{theorem}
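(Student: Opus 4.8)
The plan is to assemble $G$ from its blocks one at a time and to apply the union formula of Theorem \ref{T-WIUG} at every stage. First I would order the blocks $B_1,B_2,\ldots,B_r$ by a breadth-first traversal of the block--cut tree of $G$ starting from a fixed root block, so that for each $k\ge 2$ the block $B_k$ meets the partial union $G_{k-1}:=B_1\cup B_2\cup\cdots\cup B_{k-1}$ in exactly one vertex. This single-vertex intersection is the structural core of the argument: since the block--cut tree is acyclic, the unique cut vertex joining $B_k$ to the already-built part is its parent on the path to the root, while every other cut vertex of $B_k$ leads into a subtree that has not yet been added; hence $G_{k-1}\cap B_k$ consists of a single vertex and carries no edge.

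With the ordering in hand, the induction is routine. Each $B_k$ is a clique $K_{n_k}$, so by Theorem \ref{T-WKN} it admits a weak IASI, and by Proposition \ref{C-WKN} its sparing number is $\varphi(B_k)=\frac{1}{2}(n_k-1)(n_k-2)$. Since $G_{k-1}\cap B_k$ is a single vertex, an edgeless graph whose sparing number is $0$, Theorem \ref{T-WIUG} yields both that $G_k$ admits a weak IASI and that $\varphi(G_k)=\varphi(G_{k-1})+\varphi(B_k)-\varphi(G_{k-1}\cap B_k)=\varphi(G_{k-1})+\frac{1}{2}(n_k-1)(n_k-2)$. Telescoping from $k=1$ to $k=r$ then gives $\varphi(G)=\frac{1}{2}\sum_{i=1}^{r}(n_i-1)(n_i-2)$, and the admissibility of a weak IASI by $G$ drops out of the same construction.

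The step I expect to be the main obstacle is justifying the single-vertex intersection at each stage, that is, checking that the chosen ordering genuinely presents every new block glued along exactly one vertex; this is precisely where the clique-tree hypothesis (blocks pairwise sharing at most one vertex, together with the acyclicity of the block--cut tree) must be invoked with care. As an independent check on the value, I would observe that the blocks are pairwise edge-disjoint, so the mono-indexed edges of $G$ partition across the blocks; restricting any weak IASI of $G$ to a block $B_i$ produces a weak IASI of $K_{n_i}$, which by Theorem \ref{T-WKN} forces at least $\frac{1}{2}(n_i-1)(n_i-2)$ mono-indexed edges there, so the construction above is optimal. Finally, I would remark that the injectivity requirements on $f$ and $f^+$ are never binding, since the subsets of $\mathbb{N}_0$ are plentiful enough to keep all vertex-labels and all induced sumset edge-labels distinct; only the weak cardinality condition governs the count.
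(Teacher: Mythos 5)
Your proof is correct, and it reaches the paper's formula by a genuinely different route. The paper argues directly: the blocks of a block graph are edge-disjoint cliques meeting pairwise in at most one vertex, so it simply declares the mono-indexed edges of $G$ to partition across the cliques and sums the per-clique value $\frac{1}{2}(n_i-1)(n_i-2)$ from Theorem \ref{T-WKN}; the simultaneous achievability of the per-clique minima (that the choices of the one non-mono-indexed vertex in each clique do not conflict at shared cut vertices) is left implicit. You instead order the blocks by a BFS of the block--cut tree so that each new block meets the partial union in exactly one edgeless vertex, telescope the union formula of Theorem \ref{T-WIUG}, and add a matching lower bound by restricting any weak IASI of $G$ to each clique --- a step the paper omits entirely. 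Your bookkeeping buys real rigor: Theorem \ref{T-WIUG} as literally stated is unsafe for general unions (an edge $uv$ together with a $u$--$v$ path of length $2$ are edge-disjoint with edgeless intersection, yet their union is $C_3$ with sparing number $1$, not $0$), so your single-vertex-intersection ordering is not a pedantic flourish but precisely the condition under which the cited formula applies; and your restriction argument supplies the optimality that the paper's one-line additivity claim presupposes. One step you could still make explicit: when gluing $B_k$ along the cut vertex $v$, the optimal labeling of $G_{k-1}$ may leave $v$ either singleton or non-singleton, and you need the freedom in the clique $K_{n_k}$ to take $v$ or any other of its vertices as the unique non-singleton vertex in order to realize $\varphi(B_k)$ in both cases; since every vertex of a clique can play that role, this goes through, but it is the content hidden inside the black-box appeal to Theorem \ref{T-WIUG}.
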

\begin{proof}
A block graph is an edge disjoint union of cliques, any two which have at most one vertex in common. Let $K_{n_1},K_{n_2},K_{n_3},\ldots, K_{n_r}$ be the edge disjoint cliques in $G$. If $K_{n_i}$ and $K_{n_j}$ have a common vertex in $G$ and $K_{n_j}$ and $K_{n_k}$ has another common vertex in $G$, then $K_{n_i}$ and $K_{n_k}$ do not have any common vertex in $G$. Hence, the total number of mono-indexed edges in $G$ is the sum of mono-indexed edges in each clique $K_{n_i}$. By Theorem \ref{T-WKN}, the number of mono-indexed edges in a clique $K_{n_i}$ in $G$ is $\frac{1}{2}(n_i-1)(n_i-2)$. Since $G$ is a graph having edge disjoint cliques, the number of mono-indexed edges in $G$ is the sum of the mono-indexed edges in each clique.  Hence, the total number of mono-indexed edges in $G$ is $\frac{1}{2}\sum_{i=1}^r(n_i-1)(n_i-2)$.
\end{proof}

\noindent Now, consider the following notions.

\begin{definition}{\rm
\cite{BLS} A {\em windmill graph}, denoted by $W(n,r)$, is an undirected graph constructed for $n\ge 2$ and $r\ge 2$ by joining $r$ copies of the complete graph $K_n$ at a shared vertex.}
\end{definition}

\begin{definition}{\rm
\cite{BLS} A {\em friendship graph} or a {\em dutch wind mill} or a {\em fan graph}, denoted by $F_r$, is a graph obtained by joining $r$ copies of the cycle graph $C_3$ with a common vertex. The fan graph $F_r$ is isomorphic to the windmill graph $W(3,r)$.}
\end{definition}

\noindent The following theorem is on the sparing number of windmill graphs.

\begin{theorem}\label{T-WNRG}
A windmill graph $W(n,r)$ admits a weak IASI and the sparing number of $W(n,r)$ is $\frac{r}{2}(n-1)(n-2)$.
\end{theorem}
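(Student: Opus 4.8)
The plan is to exploit the fact that $W(n,r)$ is an edge-disjoint union of $r$ copies of $K_n$ which pairwise meet only in the single shared vertex $v$; in particular $W(n,r)$ is a block graph whose $r$ blocks are all cliques of order $n$ sharing the common cut-vertex $v$. The quickest route is therefore to invoke the block graph theorem proved above with $n_1=\cdots=n_r=n$, which immediately returns $\frac{1}{2}\sum_{i=1}^r(n-1)(n-2)=\frac{r}{2}(n-1)(n-2)$. To keep the argument self-contained, however, I would instead give a matching construction together with a lower bound tailored to the windmill structure, since the shared vertex makes both especially transparent.

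For the upper bound, I would assign to the shared vertex $v$ a fixed non-singleton subset of $\mathbb{N}_0$ and assign to each of the remaining $r(n-1)$ vertices a distinct singleton set, choosing the integers spread out enough (a Sidon-type choice over the infinite ground set $\mathbb{N}_0$) so that both $f$ and the induced $f^+$ are injective. One then checks the weak condition edge by edge: every edge incident on $v$ has $f^+$ of cardinality $\max(|f(v)|,1)=|f(v)|$, while every edge between two singleton-labelled vertices has a singleton sum set. Thus the only mono-indexed edges are those joining two singleton vertices, and since $v$ simultaneously plays the role of the unique non-singleton vertex in every clique, each copy of $K_n$ contributes exactly $\binom{n-1}{2}=\frac{1}{2}(n-1)(n-2)$ mono-indexed edges among its $n-1$ singleton vertices. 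As the cliques are edge-disjoint these contributions do not overlap, giving a weak IASI with exactly $\frac{r}{2}(n-1)(n-2)$ mono-indexed edges and in particular showing that $W(n,r)$ is a weak IASI graph.

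For the lower bound, I would observe that restricting any weak IASI of $W(n,r)$ to a single block yields a weak IASI of that $K_n$. By Theorem \ref{T-WKN} (equivalently Proposition \ref{C-WKN}) each such clique must carry at least $\frac{1}{2}(n-1)(n-2)$ mono-indexed edges, and because the $r$ blocks share no edges, these lower bounds add, forcing at least $\frac{r}{2}(n-1)(n-2)$ mono-indexed edges overall. Combining this with the construction yields $\varphi(W(n,r))=\frac{r}{2}(n-1)(n-2)$.

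The main point requiring care, and the only genuine obstacle, is justifying that a single global labelling can realise the per-clique optimum simultaneously; this works precisely because the $r$ cliques overlap in exactly one vertex, so designating that shared vertex as the unique non-mono-indexed vertex is compatible across all blocks at once. The accompanying injectivity of $f^+$ is a routine matter of choosing sufficiently generic integer labels and does not affect the count.
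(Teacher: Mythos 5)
Your proposal is correct and takes essentially the same route as the paper: decompose $W(n,r)$ into $r$ edge-disjoint copies of $K_n$ meeting in the shared vertex, apply Theorem \ref{T-WKN} to each copy, and sum the per-clique counts $\frac{1}{2}(n-1)(n-2)$. Your write-up is in fact more careful than the paper's, which simply assigns a scaled labelling $f_i = i.f$ to each copy (silently giving the common vertex conflicting labels) and asserts the count without the matching lower bound, whereas you resolve the compatibility issue by designating the shared vertex as the unique non-singleton vertex in all blocks simultaneously and you derive the lower bound by restricting an arbitrary weak IASI to each block.
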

\begin{proof}
For $1\le i \le r$, let $K_{n_i}$ be the $i$-th copy of $K_n$ in the windmill graph $W(n,r)$, which admits a weak IASI. Conventionally, we assign the IASI $f_i=i.f$ to the copy $K_{n_i}$, where $f$ is an IASI defined on $K_n$. Since any pair of $K_{n_i}$  in $G$ are edge disjoint and has the same vertex in common, the number of mono-indexed edges in $W(n,r)$ is the sum of mono-indexed edges in each $K_{n_i}$.
By Theorem \ref{T-WKN}, each $K_{n_i}$ contains $\frac{(n-1)(n-2)}{2}$ mono-indexed edges. Then, the total number of mono-indexed edges in $G$ is $\displaystyle{\sum_{i=1}^r}\frac{(n-1)(n-2)}{2}=\frac{r}{2}(n-1)(n-2)$. Hence $\varphi(W(n,r))= \frac{r}{2}(n-1)(n-2)$. 
\end{proof}

\begin{corollary}
A friendship graph $F_r= W(3,r)$ admits a weak IASI and the sparing number of $F_r$ is $r$.
\end{corollary}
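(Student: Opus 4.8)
The plan is to obtain this result as an immediate specialization of Theorem \ref{T-WNRG}. The definition given just above identifies the friendship graph $F_r$ with the windmill graph $W(3,r)$, since each copy of the cycle $C_3$ is precisely the complete graph $K_3$ and the $r$ copies are glued at a single shared vertex. Once this identification is in hand, no further structural argument is needed: I would simply substitute $n=3$ into the sparing-number formula $\frac{r}{2}(n-1)(n-2)$ supplied by Theorem \ref{T-WNRG}.

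Carrying out the substitution, I would compute $\frac{r}{2}(3-1)(3-2)=\frac{r}{2}\cdot 2\cdot 1=r$, which yields $\varphi(F_r)=r$. The admissibility of a weak IASI by $F_r$ is inherited directly from the admissibility of $W(n,r)$ established in the same theorem, so that claim requires no separate verification.

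As an independent sanity check, rather than a genuine obstacle, I would note that each of the $r$ triangles of $F_r$ is an odd cycle. By Theorem \ref{T-WUOC}, every odd cycle admitting a weak IASI must contain at least one mono-indexed edge, and since the $r$ triangles are pairwise edge-disjoint (sharing only the central vertex), at least $r$ mono-indexed edges are forced. This lower bound of $r$ matches the value produced by the formula, confirming consistency. The proof presents essentially no difficulty; the only point worth stating explicitly is the isomorphism $F_r\cong W(3,r)$, after which the conclusion follows by direct substitution.
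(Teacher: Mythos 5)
Your proposal is correct and matches the paper's own proof exactly: both substitute $n=3$ into the formula of Theorem \ref{T-WNRG} to obtain $\varphi(F_r)=\frac{r}{2}\cdot 2\cdot 1=r$. Your additional sanity check via Theorem \ref{T-WUOC} and the edge-disjointness of the $r$ triangles is a nice independent confirmation of the lower bound, but it does not constitute a different route.
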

\begin{proof}
Put $n=3$ in Theorem \ref{T-WNRG}. Then, we have the total number of mono-indexed edges in $F_r$ is $\frac{r}{2}(n-1)(n-2)=\frac{r}{2}.2.1=r$.
\end{proof}

\begin{definition}{\rm
\cite{CZ} The  {\em shadow graph} of a graph $G$ is obtained from $G$ by adding, for each vertex $v$ of $G$, a new vertex $v'$, called the {\em shadow vertex} of $v$, and joining $v'$ to the neighbours of $v$ in $G$. The shadow graph of a graph $G$ is denoted by $S(G)$.}
\end{definition} 

The following theorem establishes the admissibility of weak IASI by the shadow graph $S(G)$ of a weak IASI graph $G$ and finds out the sparing number of $S(G)$.

\begin{theorem}
The shadow graph of a weak IASI graph also admits a weak IASI. Moreover, $\varphi(S(G))=2\,\varphi(G)$.
\end{theorem}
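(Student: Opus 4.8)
The plan is to prove the two assertions in turn: first that $S(G)$ admits a weak IASI whenever $G$ does, by lifting a weak IASI of $G$ to the shadow graph, and then that the value $2\,\varphi(G)$ is both attainable and best possible. For the admissibility I would start from a weak IASI $f$ on $G$ that realises the sparing number, so that the vertices of $G$ carrying non-singleton labels form an independent set and exactly $\varphi(G)$ edges of $G$ are mono-indexed. I would extend $f$ to $S(G)$ by keeping $f$ unchanged on the original vertices and assigning to each shadow vertex $v'$ a set $f(v')$ with $|f(v')|=|f(v)|$, chosen distinct from every label already used so that injectivity of the vertex labelling, and hence of the induced edge labelling, is preserved.

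The weak IASI condition then has to be checked on the edges of $S(G)$, which come in three types: the original edges $uv$ of $G$ and, attached to each such $uv$, the two shadow edges $uv'$ and $u'v$. The original edges inherit the condition directly from $f$. The key observation for the shadow edges is that since $v'$ carries a label of the same cardinality as $v$, the pair $uv'$ behaves exactly like $uv$ with respect to the max-condition: because $uv$ already satisfied $|f^+(uv)|=\max(|f(u)|,|f(v)|)$, at least one of $u,v$ is mono-indexed, hence at least one of $u,v'$ is mono-indexed, and symmetrically for $u'v$. This shows that the extended $f$ is a weak IASI and therefore $S(G)$ is a weak IASI graph.

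For the value of the sparing number I would first count the mono-indexed edges produced by a well-chosen labelling and then seek a matching lower bound. Under the mirror labelling above, an original edge $uv$ is mono-indexed exactly when both $u$ and $v$ are, accounting for the $\varphi(G)$ mono-indexed original edges; the remaining mono-indexed edges lie among the shadow edges $uv',u'v$, and the equality $|f(v')|=|f(v)|$ makes $uv'$ mono-indexed precisely when $uv$ is. The heart of the upper-bound argument is therefore to choose the shadow labels cleverly --- for instance promoting certain shadow vertices to non-singleton labels, which is admissible exactly when all of their original neighbours are mono-indexed --- so that the number of mono-indexed shadow edges is brought down to $\varphi(G)$, yielding $2\,\varphi(G)$ in total. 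Pinning down this count, namely which of the two shadow edges attached to a mono-indexed edge is genuinely unavoidable and verifying that each promotion of a shadow vertex never violates the weak IASI condition, is the delicate bookkeeping step.

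Finally, for optimality I would need the lower bound that every weak IASI of $S(G)$ has at least $2\,\varphi(G)$ mono-indexed edges. The natural route is to restrict an arbitrary weak IASI of $S(G)$ to the original copy of $G$ and to the shadow structure and to combine the two estimates, invoking Theorem \ref{T-WIUG} on unions together with the fact that the shadow edges force cardinality constraints on the shadow vertices analogous to those on the originals. I expect this lower bound to be the main obstacle: one must rule out \emph{mixed} labellings that trade mono-indexed edges between the original part and the shadow part, and show that no such trade can undercut $2\,\varphi(G)$. Establishing that the original and shadow contributions cannot be made simultaneously cheaper than $\varphi(G)$ each --- rather than, say, sacrificing optimality on the originals in order to liberate the shadow edges --- is where the real content, and the real difficulty, of the theorem lies.
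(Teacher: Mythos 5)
Your admissibility argument --- extend an optimal weak IASI $f$ of $G$ to $S(G)$ by giving each shadow vertex $v'$ a fresh label with $|f(v')|=|f(v)|$ --- is sound, and it is essentially the construction the paper has in mind. But the two steps you defer as ``delicate bookkeeping'' are not bookkeeping: they are the entire content of the equality $\varphi(S(G))=2\varphi(G)$, and the first of them actually fails. Under your mirror labelling, a mono-indexed edge $uv$ of $G$ forces \emph{both} shadow partners $uv'$ and $u'v$ to be mono-indexed, giving $3\varphi(G)$, not $2\varphi(G)$; and the promotion you propose (re-labelling $u'$ or $v'$ with a non-singleton set) is admissible only if every neighbour of $u$ (resp.\ of $v$) in $G$ is mono-indexed. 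Both promotions can be blocked simultaneously. Take $G=K_3$ on $\{a,b,c\}$ with $a$ non-singleton, so $bc$ is the one mono-indexed edge: both $b'$ and $c'$ are adjacent to $a$ in $S(K_3)$, so neither can be promoted. In fact, since the non-singleton vertices of any weak IASI form an independent set and the mono-indexed edges are exactly those missed by that set, one checks that every independent set of $S(K_3)$ covers at most $6$ of its $9$ edges (the maximum is attained by $\{a',b',c'\}$ or $\{a,a'\}$), so $\varphi(S(K_3))=3\neq 2=2\varphi(K_3)$; the same count gives $\varphi(S(C_5))=3\neq 2$. So no choice of promotions can close your gap, and the lower bound $\varphi(S(G))\ge 2\varphi(G)$ you flag as the main obstacle is in these cases not the binding constraint --- the claimed value is simply not attainable.

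For comparison, the paper's proof proceeds differently from yours: for a vertex $v$ with neighbours $u,w$ it forms the $4$-cycle $vuv'wv$ and invokes the parity result (Theorem \ref{T-MIEOE}) to conclude that each mono-indexed edge of $G$ acquires exactly one mono-indexed partner in $S(G)$. Your accounting exposes precisely the hole in that argument: every edge $uv$ lies on \emph{two} shadow $4$-cycles, one through $u'$ and one through $v'$, and parity applied to a single cycle cannot prevent both partners from being forced; the paper also offers no lower-bound argument and no verification that the parity constraints are simultaneously satisfiable by an actual labelling. So your proposal, while incomplete, is more careful than the published proof, and your skepticism at the ``promotion'' step is exactly where the theorem itself breaks down.
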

\begin{proof}
Let $v$ be an arbitrary vertex of $G$. Let $v'$ be the shadow vertex of $G$. If $v$ is adjacent to a single vertex, say $u$, then $v'$ is also adjacent to $u$ in $S(G)$. Hence, $vuv'$ is a path of length $2$ in $S(G)$. If $G$ is a tree, then $S(G)$ is also tree and by Theorem \ref{T-WUOC1}, $S(G)$ need not have a mono-indexed edge. 

If the vertex $v$ is adjacent to two vertices, say $u$ and $w$ in $G$, then $v'$ is adjacent to $u$ and $w$ in $S(G)$ forming a cycle $C:vuv'wv$ of length $4$. Then, by Theorem \ref{T-MIEOE}, it has even number of mono-indexed edges. That is, if either $vu$ or $vw$ is mono-indexed in $G$, then the corresponding edge $v'u$ or $v'w$ is also mono-indexed in $S(G)$. Since $v$ is arbitrary, this is true for any vertex of $G$. Hence, $\varphi(S(G))=2\, \varphi(G)$. 
\end{proof}

In the following results, we discuss the admissibility of a weak IASI by some other associated graphs of a given IASI graph $G$.

\begin{definition}{\rm
\cite{BM1} The {\em subdivision} of some edge $e=uv$ is a graph obtained by introducing one new vertex $w$ to the edge $uv$, and with two edges replacing $e$ by two new edges, $uw$ and $wv$. A {\em subdivision} of a graph $G$ is a graph resulting from the subdivision of edges in $G$. 

Two graphs $G$ and $G'$ are said to be {\em homeomorphic} if there is an graph isomorphism from some subdivision of $G$ to some subdivision of $G'$.}
\end{definition}

The following theorem verifies the admissibility of weak IASI by a homeomorphic graph $G'$of a weak IASI graph $G$. 

\begin{theorem}\label{T-WIHG}
A graph $G'$ obtained by the subdivision of an edge $e$ of a weak IASI graph $G$ admits a (induced) weak IASI if and only if $e$ is mono-indexed.
\end{theorem}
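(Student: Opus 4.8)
The plan is to analyse the labelling that $G'$ inherits from $f$: keep $f$ unchanged on every original vertex and give the new subdivision vertex $w$ the label $f(w)=f(u)+f(v)$, i.e. the old edge-label $f^+(e)$. Since every edge of $G$ except $e$ retains both of its end-labels, the only edges whose weak IASI condition needs checking in $G'$ are the two new edges $uw$ and $wv$; this reduces the whole problem to those two edges. Two facts will do the work: first, the observation (recorded just after the definition of a weak IASI) that at least one of $u,v$ is mono-indexed; second, the elementary sumset bound $|A+A|\ge 2|A|-1$ for any finite $A\subseteq\mathbb{N}_0$, which one sees by ordering $A=\{x_1<x_2<\cdots<x_m\}$ and reading off the $2m-1$ strictly increasing sums $x_1+x_1<\cdots<x_1+x_m<x_2+x_m<\cdots<x_m+x_m$.

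For the ``if'' direction I would assume $e$ is mono-indexed, so $f(u)=\{a\}$ and $f(v)=\{b\}$ are singletons. Then $f(w)=\{a+b\}$ is a singleton too, so both $uw$ and $wv$ join singleton-labelled vertices and satisfy $|f^+(uw)|=|f^+(wv)|=1=\max$. Thus the induced labelling meets the weak IASI condition on every edge; the injectivity of $f$ and $f^+$ on $G'$ is routine and can be secured by choosing the integers involved generically.

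For the converse I would argue by contraposition. If $e$ is not mono-indexed then, as one end of $e$ is still forced to be mono-indexed, exactly one endpoint carries a non-singleton label; say $|f(u)|=1$ and $|f(v)|=m\ge 2$. Translation by the single element of $f(u)$ gives $|f(w)|=|f(u)+f(v)|=m$, whence $\max(|f(w)|,|f(v)|)=m$. The decisive step is the edge $wv$: from $f^+(wv)=f(w)+f(v)=f(u)+\big(f(v)+f(v)\big)$ and the fact that $f(u)$ is a singleton we obtain $|f^+(wv)|=|f(v)+f(v)|\ge 2m-1>m$, violating the weak IASI condition. Hence the induced labelling is not a weak IASI, which gives the equivalence.

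The genuinely delicate point is this last strict inequality $|f(v)+f(v)|>m$: the entire ``only if'' direction rests on the self-sumset of a set of size $m\ge 2$ being strictly bigger than $m$, and this is exactly the statement that fails when $m=1$. That dichotomy at $m=1$ is precisely why mono-indexedness of $e$ is both necessary and sufficient, so I would make sure the sumset bound and the $m=1$ boundary case are stated cleanly before invoking them.
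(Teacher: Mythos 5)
Your proposal is correct and takes essentially the same route as the paper: both give the subdivision vertex $w$ the old edge-label $f^+(e)$, reduce everything to the two new edges, and locate the failure at the edge $wv$ when $e$ is not mono-indexed. The only difference is cosmetic --- you verify the violation explicitly via the sumset bound $|A+A|\ge 2|A|-1$, whereas the paper invokes its standing observation that every edge of a weak IASI graph must have a mono-indexed end vertex, which is the same fact unpacked.
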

\begin{proof}
First observe that if one new element is introduced to an IASI graph in place of another element of $G$, it is customary to assign the same set-label of the replaced element to the newly introduced element. 

Now, assume that the graph $G'$ obtained by subdividing an edge $e=uv$ of a weak IASI graph $G$. Let $w$ be the new vertex introduced to the edge $e$. Then, the set-label of $w$ in $G'$ is the same set-label of $e$ in $G$. If $e$ is not mono-indexed in $G$, then either $u$ or $v$ has a non-singleton set-label. Without loss of generality, let $v$ be the vertex which is not mono-indexed. Then, for the edge $wv$ of $G'$ both the end vertices $u$ and $v$ have non-singleton set-labels, which is a contradiction to the hypothesis that $G'$ admits a weak IASI.

Let $e$ be a mono-indexed edge in $G$. Then, the end vertices $u$ and $v$ of $e$ are mono-indexed in $G$. Let $G'$ be the graph obtained from $G$ by introducing a vertex $w$ to the edge $e$. In $G'$, the vertex $w$ has the same set-label of $e$ in $G$. Hence, the edge $uw$ and $vw$ in $G'$ are mono-indexed in $G'$. Hence, $G'$ is a weak IASI graph. 
\end{proof}

\begin{remark}\label{R-WIHG}{\rm 
Due to Theorem \ref{T-WIHG}, it can be observed that the graph $G'$ obtained by subdividing the mono-indexed edges of a weak IASI graph is also a weak IASI graph. The graph subdivision which is obtained by subdividing all the mono-indexed edges of $G$ is called a {\em maximal subdivision} of $G$ with respect to the weak IASI of $G$.}
\end{remark} 

The following result is on the sparing number of a maximal subdivision $G'$ of a weak IASI graph $G$.

\begin{theorem}
The sparing number of the maximal subdivision graph $G'$ of a weak IASI graph $G$ is $2\, \varphi(G)$. 
\end{theorem}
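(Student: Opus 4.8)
The plan is to determine an optimal weak IASI on $G'$ by sandwiching its number of mono-indexed edges between two matching bounds. Fix a weak IASI $f$ on $G$ realizing the sparing number, so that $f$ has exactly $\varphi(G)$ mono-indexed edges, and let $G'$ be the maximal subdivision of $G$ taken with respect to this $f$ as in Remark \ref{R-WIHG}. By that remark, which rests on Theorem \ref{T-WIHG}, $G'$ already carries an induced weak IASI, and the substance of the statement is that this induced labeling has $2\,\varphi(G)$ mono-indexed edges and that no weak IASI of $G'$ does better.

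First I would produce the induced labeling and count. Using the replacement convention of Theorem \ref{T-WIHG}, each subdivided edge $e=uv$ is mono-indexed, so both $u$ and $v$ are mono-indexed and the inserted vertex $w_e$ inherits the singleton set-label $f^+(e)$. Hence the two new edges $uw_e$ and $w_ev$ are each mono-indexed, whereas every edge of $G$ that was not subdivided keeps its original end-labels and so remains non-mono-indexed. As exactly the $\varphi(G)$ mono-indexed edges of $f$ are subdivided and each yields two mono-indexed edges, the induced weak IASI on $G'$ has precisely $2\,\varphi(G)$ mono-indexed edges; this already gives $\varphi(G')\le 2\,\varphi(G)$.

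For the reverse inequality I would take an arbitrary weak IASI $g$ of $G'$ and analyze each subdivision path $u-w_e-v$, using that every inserted vertex $w_e$ has degree $2$. If $w_e$ is mono-indexed under $g$, then $uw_e$ (respectively $w_ev$) is mono-indexed exactly when $u$ (respectively $v$) is; if $w_e$ is non-singleton, then the weak IASI condition on $uw_e$ and $w_ev$ forces both $u$ and $v$ to be mono-indexed, which in turn tends to force mono-indexed edges among the original edges of $G$ incident with $u$ or $v$. The natural device is to contract each path $u-w_e-v$ back to the edge $uv$, thereby turning $g$ into a labeling of $G$, and then to compare the mono-indexed counts, controlling parity through Theorem \ref{T-MIEOE} applied to the cycles of $G'$ that traverse the subdivision vertices.

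I expect this reverse inequality to be the genuine obstacle, and indeed to require care about what ``sparing number of $G'$'' is being asserted. Assigning every $w_e$ a non-singleton label removes all the new edges $uw_e,w_ev$ from the mono-indexed count at the cost of making $u,v$ mono-indexed, and these costs need not reproduce $2\,\varphi(G)$ globally once the endpoints are shared among many subdivided and original edges. The smallest instance already exhibits the tension: for $G=C_3$ one has $\varphi(G)=1$ by Theorem \ref{T-WUOC}, yet the maximal subdivision of $C_3$ is $C_4$, whose true sparing number is $0$ by Theorem \ref{T-WUOC1}. Thus the equality $\varphi(G')=2\,\varphi(G)$ is to be read as the count carried by the weak IASI induced from an optimal labeling of $G$, and the remaining work is to verify that this induced labeling is optimal for $G'$ within the graph classes under consideration.
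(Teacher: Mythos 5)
Your first paragraph---inducing the labeling on $G'$ and counting two mono-indexed edges per subdivided edge---is, in substance, the \emph{entirety} of the paper's proof: the paper notes via Theorem \ref{T-WIHG} that each subdivided edge $uv$ is mono-indexed and yields two mono-indexed edges $uw$, $wv$, and then concludes $\varphi(G')=2\,\varphi(G)$ directly from this count. The paper makes no attempt at the reverse inequality; it silently identifies the number of mono-indexed edges carried by the induced labeling with the minimum over \emph{all} weak IASIs of $G'$, which is what $\varphi$ denotes by definition. So up to that point you and the paper coincide, and what you correctly flag as ``the genuine obstacle'' is exactly the step the paper omits.

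Your scepticism is justified and your counterexample is sound: for $G=C_3$ with an optimal weak IASI, exactly one edge is mono-indexed (Theorems \ref{T-WUOC} and \ref{T-MIEOE}), so the maximal subdivision is $G'=C_4$, which is bipartite and has $\varphi(C_4)=0$ by Theorem \ref{T-WUOC1}, not $2\,\varphi(C_3)=2$. (Concretely, labeling the vertices of $C_4$ in order by $\{1,2\},\{0\},\{3,4\},\{5\}$ gives a weak IASI of $C_4$ with no mono-indexed edge.) The mechanism is the one you isolate in your reverse-inequality sketch: an inserted vertex $w_e$ of degree $2$ may receive a non-singleton label at the cost of forcing only its two neighbours to be singleton, and globally these costs can undercut $2\,\varphi(G)$. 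Hence the theorem as literally stated is false, and no completion of the lower bound exists; what is actually provable---and what both your first paragraph and the paper's proof establish---is the inequality $\varphi(G')\le 2\,\varphi(G)$, or equivalently the statement that the weak IASI of $G'$ induced from an optimal labeling of $G$ carries exactly $2\,\varphi(G)$ mono-indexed edges. Your proposal therefore proves precisely as much as the paper does, while additionally demonstrating, correctly, that the remaining gap cannot be closed.
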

\begin{proof}
If a vertex $w$ is introduced to the edge $uv$ of $G$ to get a subdivision $G'$, then by Theorem \ref{T-WIHG}, $uv$ is mono-indexed in $G$. Also, in $G'$ the new edges $uw$ and $wv$ are mono-indexed. That is, corresponding to every mono-indexed edge in $G$, there exist two mono-indexed edges in its subdivision graph. Therefore, for a maximal subdivision $G'$ of $G$, $\varphi(G')= 2\,\varphi(G)$.
\end{proof}

\noindent Now, recall the following definition.

\begin{definition}{\rm
\cite{FH} A {\em cactus} or a {\em cactus tree} is a connected graph $G$ in which any two simple cycles have at most one vertex in common. A graph $G$ is a cactus if and only if every block in $G$ is either a simple cycle or a single edge. The following result and estimates sparing number of a cactus graph.}
\end{definition}

\noindent The following theorem estimates the sparing number of a cactus.

\begin{theorem}
The sparing number of a cactus $G$ is $r$, where $r$ is the number of odd cycles in $G$.
\end{theorem}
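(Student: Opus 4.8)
The plan is to exploit the block structure of a cactus together with the union formula of Theorem \ref{T-WIUG}. By definition, every block of a cactus $G$ is either a single edge (a bridge) or a simple cycle, and any two blocks meet in at most one vertex. So the first step is to record the sparing number of each individual block: a bridge is a tree and an even cycle is bipartite, so by Theorem \ref{T-WUOC1} each of these has sparing number $0$; an odd cycle has sparing number $1$ by Theorem \ref{T-WUOC}. Thus the only blocks that force a mono-indexed edge are the odd cycles.

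Next I would assemble $G$ from its blocks one at a time along the block-cut tree. Starting from any single block, I add the remaining blocks in an order in which each newly adjoined block $B$ shares exactly one (cut) vertex with the union $H$ of the blocks already placed; such an order exists because the block-cut tree of a connected graph is a tree, and the blocks of a cactus meet pairwise in at most one vertex. Since $B\cap H$ is then a single vertex and contains no edge, $\varphi(B\cap H)=0$, and Theorem \ref{T-WIUG} gives $\varphi(H\cup B)=\varphi(H)+\varphi(B)$. Iterating this identity over all blocks yields $\varphi(G)=\sum_i\varphi(B_i)$, the sum being taken over the blocks of $G$. The same iteration applied to Theorem \ref{T-WIUG} simultaneously shows that $G$ admits a weak IASI, since every block does.

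Finally, substituting the per-block values from the first step, every bridge and every even cycle contributes $0$ while every odd cycle contributes $1$, so the sum collapses to the number of odd cycles, giving $\varphi(G)=r$.

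The main obstacle I anticipate is the bookkeeping in the second step: one must verify that the blocks can be ordered so that each intersection with the partial union is a single vertex (hence edge-free and of sparing number $0$) rather than something larger, and that Theorem \ref{T-WIUG}, stated for two graphs, may legitimately be iterated. Both points follow from the defining property that distinct blocks of a cactus meet in at most one vertex and from the tree structure of the block-cut tree, so once this ordering is made explicit the remainder is a routine induction on the number of blocks.
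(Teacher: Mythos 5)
Your proposal is correct and follows essentially the same route as the paper: decompose the cactus into its blocks (cycles and edges/trees meeting pairwise in at most one vertex), apply Theorem \ref{T-WIUG} to sum the sparing numbers block by block, and observe that trees and even cycles contribute $0$ by Theorem \ref{T-WUOC1} while each odd cycle contributes $1$ by Theorem \ref{T-WUOC}. If anything, your version is tidier than the paper's, since you make explicit the ordering along the block-cut tree under which each new block meets the partial union in a single vertex (so that $\varphi(B\cap H)=0$) and the induction that justifies iterating the two-graph union formula, points the paper passes over without comment.
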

\begin{proof}

Let $G$ be a cactus. Then, $G$ may be considered as a union of edge disjoint paths (or trees) and cycles, any two of which has at most one vertex in common. Let $C_n$ be a cycle which admits a weak IASI and let $P_m$ be a path (or $T$ be a tree), which has at most one vertex in common. Then, by Theorem \ref{T-WIUG}, $C_n\cup P_m$ (or $C_m\cup T$) admits a weak IASI and $\varphi(C_n\cup P_m) = \varphi(C_m)$ (or $\varphi(C_n\cup T) = \varphi(C_m)$). Hence, in order to estimate the sparing number of a cactus, we need to take the sum of the sparing numbers of all cycles in it.

Let $C_{m_1},C_{m_2},C_{m_3},\ldots, C_{m_n}$ be the edge disjoint cycles in $G$. Without loss of generality, let $C_{m_1},C_{m_2},C_{m_3},\ldots, C_{m_r}$ be the odd cycles and $C_{m_{r+1}}, C_{m_{r+2}},\ldots, C_{m_{n}}$ be the even cycles in $G$.

Now, assume that $G$ admits a weak IASI, each cycle in $G$ must have a weak IASI, say $f_i$, where $f_i$ is the restriction of $f$ to the cycle $C_{m_i}$.  By Theorem \ref{T-WUOC1}, even cycles do not necessarily have a  mono-indexed edge and each odd cycles $C_i$, must have at least one mono-indexed edge. Therefore, $G$ has at least $r$ mono-indexed edges. Hence, $\varphi(G)=r$.
\end{proof}

Our next discussion is on the sparing number of a graph structure called $(m,n)$-cone. First, recall the definition of an $(m,n)$-cone.

\begin{definition}{\rm 
An {\em $(m,n)$-cone} or an {\em $n$-point suspension}, denoted by $C_{m,n}$ is a graph $G$ whose vertex is partitioned into two sets $U$ and $S$ where $S$ is an independent set and the vertices of $U$ forms a cycle in $G$ such that every vertex of $W$ is adjacent with all vertices of $U$. 

For positive integers $m,n$, if $\langle U \rangle=C_m$, then $C_{m,n}=C_m + S$, where $n=|S|$.}
\end{definition}

If $n=1$, then $C_{m,n}$ reduces to a wheel graph $W_{m+1}$ and, the following result discusses the sparing number of a wheel graph $W_{m+1}$. 

\begin{proposition}
\cite{GS4} The sparing number of a wheel graph $W_{m+1}$ is $\lceil \frac{(m-1)}{2} \rceil$.
\end{proposition}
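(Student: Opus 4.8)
The plan is to convert the labelling problem into a covering problem and then optimise over the two essentially different labelling schemes, in the same spirit as the proofs for the sun and split graphs. Write $u_0$ for the hub of $W_{m+1}$ and $u_1,u_2,\dots,u_m$ for the rim vertices, so that $\langle u_1,\dots,u_m\rangle=C_m$ and $u_0$ is adjacent to every $u_i$. The starting point is the structural remark made just after the definition of a weak IASI: in any weak IASI the vertices carrying non-singleton set-labels form an independent set, and an edge is mono-indexed exactly when both of its end vertices are mono-indexed. Consequently $\varphi(W_{m+1})$ is the minimum, over all independent sets $I\subseteq V(W_{m+1})$ of prospective non-singleton vertices, of the number of edges having neither end in $I$. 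Since $\mathbb{N}_0$ is infinite, any such optimal choice of $I$ can be realised by an actual injective weak IASI, so it suffices to solve this combinatorial covering problem.

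Next I would split according to whether the hub belongs to $I$. If $u_0\in I$ then no rim vertex may be chosen, so $I=\{u_0\}$; every spoke is then covered and all $m$ rim edges are mono-indexed, giving $m$ mono-indexed edges. If $u_0\notin I$ then $I$ is an independent set of the rim cycle $C_m$, and each chosen rim vertex covers its two incident rim edges together with its spoke, with no two of these covered edges coinciding. Taking $I$ to be a maximum independent set of $C_m$, of size $\lfloor m/2\rfloor=\lceil(m-1)/2\rceil$, covers the largest possible number of edges and hence leaves the fewest uncovered. Comparing the two cases should single out this hub-singleton scheme as optimal; when $m$ is even the maximum independent set meets every second rim vertex, covers all $m$ rim edges, and leaves precisely the $m/2$ spokes at the unchosen vertices mono-indexed, so $\varphi(W_{m+1})=m/2=\lceil(m-1)/2\rceil$ falls out immediately.

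The step I expect to be the real obstacle is the exact count in the hub-singleton scheme when $m$ is odd, and this is where the argument must be handled with care. Here the rim is an odd cycle, so by Theorem \ref{T-WUOC} and Theorem \ref{T-MIEOE} it must retain an odd number of mono-indexed edges and a maximum independent set can no longer cover every rim edge; one must therefore track simultaneously the residual rim edge forced by Theorem \ref{T-MIEOE} and the spokes incident to mono-indexed rim vertices. Before committing to the closed form I would test the totals against the smallest wheels, in particular $W_4=K_4$, whose sparing number is already fixed at $\tfrac{1}{2}(4-1)(4-2)=3$ by Theorem \ref{T-WKN}; reconciling the odd-rim accounting with this base value is exactly the delicate point on which the formula $\lceil(m-1)/2\rceil$ stands or falls, and I would resolve the odd case separately before asserting the general statement.
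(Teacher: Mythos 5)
Your covering reformulation is sound (non-singleton vertices form an independent set $I$, and an edge is mono-indexed exactly when both ends lie outside $I$), and your even case is correct. But the proposal is not a proof: you explicitly defer the odd case, and that is precisely where the statement collapses. Carry your own computation one step further. If the hub $u_0\in I$, then $I=\{u_0\}$ and all $m$ rim edges are mono-indexed. If $u_0\notin I$ and $|I|=k$, then since the hub is mono-indexed, every spoke to an unchosen rim vertex is mono-indexed, and the rim edges covered by the $k$ chosen vertices are pairwise distinct, so the count is exactly $(m-k)+(m-2k)=2m-3k$, minimized at $k=\lfloor m/2\rfloor$. For even $m$ this yields $m/2=\lceil (m-1)/2\rceil$, as you found. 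But for odd $m\ge 3$ it yields $(m+3)/2$, while the hub-in option gives $m$; the minimum is therefore $(m+3)/2$, not $\lceil (m-1)/2\rceil=(m-1)/2$. Your test case settles it: for $m=3$, $W_4=K_4$ and Theorem \ref{T-WKN} forces $\varphi(W_4)=3$, consistent with $(3+3)/2=3$ but not with the claimed value $1$; likewise $\varphi(W_6)=4$, not $2$. So under this paper's conventions the stated formula holds only for even $m$, and no argument along your lines (or any other) can establish it in general. Your instinct that the odd case is "where the formula stands or falls" was exactly right; the gap is that you stopped at the instinct instead of executing the three-line computation that refutes the claim.

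For the comparison you were asked to make: there is nothing on the paper's side to reconcile with, since the proposition is quoted from \cite{GS4} without proof and serves only as a lead-in to Theorem \ref{T-SNMNC}, which treats the cone $C_{m,n}$ only for $n\ge 2$. The correct closed form emerging from your framework is $\varphi(W_{m+1})=2m-3\lfloor m/2\rfloor$, i.e., $m/2$ for even $m$ and $(m+3)/2$ for odd $m$ (note the uncovered rim edge in the odd case has both ends mono-indexed, consistent with the parity constraint of Theorem \ref{T-MIEOE}). One further small point: your reduction assumes any independent-set pattern can be realized by an actual injective weak IASI with injective $f^+$; this is true (assign distinct singletons to mono-indexed vertices and generically chosen non-singleton sets elsewhere), but since it is the step converting the combinatorial minimum into the sparing number, it deserves an explicit line rather than an appeal to the infinitude of $\mathbb{N}_0$ alone.
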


Hence, we estimate the sparing number of an $(m,n)$-cone in the following theorem. 

\begin{theorem}\label{T-SNMNC}
For $m>0, n\ge 2$, the sparing number of an $(m,n)$-cone is $m$.
\end{theorem}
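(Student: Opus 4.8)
The plan is to exploit the basic structural fact, already noted in the introduction, that in any weak IASI the vertices carrying non-singleton set-labels must form an independent set, since an edge both of whose end vertices are non-mono-indexed would violate the defining condition $|f^+(uv)|=\max(|f(u)|,|f(v)|)$. Consequently the mono-indexed edges of a weak IASI are exactly the edges joining two mono-indexed vertices, so $\varphi(C_{m,n})$ equals the minimum, over all independent sets $I$ chosen to be the non-mono-indexed set, of the number of edges of the subgraph induced by the remaining (mono-indexed) vertices. First I would record the edge structure of $C_{m,n}=C_m+S$: the $m$ cycle edges inside $U$, the $mn$ join edges between $U$ and $S$, and no edges inside the independent set $S$. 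The crucial consequence of the complete join is that no independent set can contain a vertex of $U$ together with a vertex of $S$; hence the non-mono-indexed set $I$ lies entirely inside $U$ (and is then independent in $C_m$) or entirely inside $S$.

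For the upper bound $\varphi(C_{m,n})\le m$, I would exhibit the labelling that assigns distinct non-singleton sets to all vertices of $S$ and distinct singleton sets to all vertices of $U$; with the integers chosen so that the induced edge-sums stay distinct, this is a weak IASI whose only mono-indexed edges are the $m$ edges of the cycle $C_m$.

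The harder half is the lower bound, and this is where the hypothesis $n\ge 2$ enters. I would split according to the two possibilities for $I$. If $I\subseteq S$, then every vertex of $U$ is mono-indexed, so all $m$ cycle edges are mono-indexed and the count is at least $m$. If $I\subseteq U$ is independent in $C_m$ with $|I|=k\le\lfloor m/2\rfloor$, then $S$ is entirely mono-indexed; since the $k$ vertices of $I$ are pairwise non-adjacent in the cycle, deleting them removes exactly $2k$ cycle edges and leaves $m-2k$ of them, while each of the remaining $m-k$ vertices of $U$ contributes $n$ mono-indexed join edges, giving $m(n+1)-k(n+2)$ mono-indexed edges in total. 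The main obstacle is then the clean inequality $m(n+1)-k(n+2)\ge m$ for all $k\le\lfloor m/2\rfloor$; this reduces to $k\le \frac{mn}{n+2}$, which holds because $k\le m/2$ and $\frac{1}{2}\le\frac{n}{n+2}$ precisely when $n\ge 2$.

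Thus every weak IASI of $C_{m,n}$ uses at least $m$ mono-indexed edges, matching the construction, and so $\varphi(C_{m,n})=m$. I expect the only delicate point to be confirming that the extremal labelling can indeed be realized as a genuine (injective) IASI on the edges as well as the vertices, which is routine given the freedom available in choosing subsets of $\mathbb{N}_0$.
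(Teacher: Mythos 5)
Your proof is correct, and it takes a genuinely different --- and in fact more complete --- route than the paper's. The paper's own argument compares only two specific labellings, split by the parity of $m$: labelling $C_m$ first with the maximum alternating pattern (so that $\lfloor m/2 \rfloor$ cycle vertices get non-singleton labels, yielding $\frac{mn}{2}$ mono-indexed edges for even $m$ and $1+\frac{(m+1)n}{2}$ for odd $m$), versus labelling $S$ first (yielding $m$), and then declares the smaller count to be the sparing number. This leaves a gap on the lower-bound side: it never rules out intermediate labellings in which only some of the cycle vertices carry non-singleton labels. Your reduction of $\varphi(C_{m,n})$ to a minimization over independent sets $I$ of non-mono-indexed vertices --- justified by the structural fact that non-singleton-labelled vertices form an independent set and that mono-indexed edges are exactly those with two singleton-labelled ends --- closes this gap: the complete join forces $I\subseteq U$ or $I\subseteq S$, and your count $m(n+1)-k(n+2)$ for $I\subseteq U$ with $|I|=k\le\lfloor m/2\rfloor$ is at least $m$ precisely because $n\ge 2$, which pinpoints exactly where that hypothesis is used. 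As a bonus, your argument handles both parities of $m$ uniformly (the paper needs two cases) and correctly permits equality at $n=2$, $k=m/2$, where the paper's claimed strict inequality $\frac{mn}{2}>m$ in fact fails as stated, though harmlessly for the conclusion. The one step you leave implicit --- that any prescribed independent set can be realized as the non-singleton-labelled set of a genuine injective IASI --- is indeed routine given the freedom in choosing subsets of $\mathbb{N}_0$, and the paper takes the same realizability for granted.
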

\begin{proof}
For $m>0, n\ge2$, let $C_{m,n}= C_m+S$, where $S$ be an independent set in $G$. Now we have the following cases.

\noindent {\em Case-1:}~  Let $m$ be an even integer. Hence, $C_m$ is an even cycle and if we label the vertices of $C_m$ first, then by Theorem \ref{T-WUOC1}, $\varphi(C_m)=0$ and it has exactly $\frac{m}{2}$ mono-indexed vertices and exactly $\frac{m}{2}$ vertices that are not mono-indexed. Since each vertex $w_j$ of $S$ is adjacent to all vertices of $C_m$, all vertices of $S$ must be mono-indexed. Therefore, exactly $\frac{m}{2}$ edges incident on each vertex $w_j$ of $S$ are mono-indexed. The number of mono-indexed edges between $C_m$ and $S$ is $\frac{mn}{2}$. Hence, the total number of mono-indexed edges in $C_{m,n}$ in this case is $\frac{mn}{2}$.

If the vertices of $S$ are labeled first, then all vertices of $S$ can be labeled by distinct non-singleton sets, since $S$ is independent set in $C_{m,n}$. Then, every vertex of $C_m$ must be mono-indexed. In this case, no edge between $C_m$ and $S$ is mono-indexed and the total number of mono-indexed edges in $C_{m,n}$ is $m$.

Since $n\ge 2$, we have $\frac{mn}{2}>m$.

\noindent {\em Case-2:}~  Let $m$ be an odd integer. Hence, $C_m$ is an odd cycle and if we label the vertices of $C_m$ first, then by Theorem \ref{T-WUOC}, $\varphi(C_m)=1$ and it has exactly $\frac{(m+1)}{2}$ mono-indexed vertices and exactly $\frac{(m-1)}{2}$ vertices that are not mono-indexed. Since each vertex $w_j$ of $S$ is adjacent to all vertices of $C_m$, all vertices of $S$ must be mono-indexed. Therefore, exactly $\frac{(m+1)}{2}$ edges incident on each vertex $w_j$ of $S$ are mono-indexed. The number of mono-indexed edges between $C_m$ and $S$ is $\frac{(m+1)n}{2}$. Hence, the total number of mono-indexed edges in $C_{m,n}$ in this case is $1+\frac{(m+1)n}{2}$.

If the vertices of $S$ are labeled first, then as explained in Case-1, all vertices of $S$ can be labeled by distinct non-singleton sets. Then, every vertex of $C_m$ must be mono-indexed. In this case no edge between $C_m$ and $S$ is mono-indexed and the total number of mono-indexed edges in $C_{m,n}$ is $m$.

Since $n\ge 2$, we have $1+\frac{(m+1)n}{2}>m$.

In both cases, the minimum number of mono-indexed edges in $C_{m,n}$ is $m$. Hence, the sparing number $\varphi(C_{m,n})=m$. 
\end{proof}
 
\section{Conclusion}

In this paper, we have established some results on the sparing number of certain graphs and graph classes. The admissibility of weak IASI by certain other graph classes, graph operations and graph products and finding the corresponding sparing numbers are still open. There are several other open problems regarding the necessary and sufficient conditions for the admissibility of certain IASIs, both uniform and non-uniform, by various graphs and graph classes.

\end{document}